\documentclass[11pt]{amsart}

\usepackage{amsthm,amsfonts,amsmath,amssymb,verbatim,rotating} 
\usepackage[left=2.9cm,right=2.9cm,top=3.5cm,bottom=3.5cm]{geometry}
\usepackage{tikz,graphicx}
\usepackage{mathrsfs}
\usepackage{braket}

\usepackage{fouridx}

\usepackage[normalem]{ulem}

\usepackage{hyperref}

\allowdisplaybreaks

\numberwithin{equation}{section}
\newtheorem{theorem}{Theorem}[section]
\newtheorem{corollary}[theorem]{Corollary}

\newtheorem{proposition}[theorem]{Proposition}

\theoremstyle{definition}

\renewcommand{\leq}{\leqslant}
\renewcommand{\geq}{\geqslant}

\renewcommand{\Re}{\textup{Re}}

\newcommand{\dup}{\mathrm{d}}

\newcommand{\iup}{\hspace{1pt}\mathrm{i}\hspace{1pt}}
\newcommand{\Int}{\int\limits}
\newcommand{\bla}{\boldsymbol{\la}}
\newcommand{\bmu}{\boldsymbol{\mu}}
\newcommand{\bnu}{\boldsymbol{\nu}}

\newcommand{\bkappa}{\boldsymbol{\kappa}}
\newcommand{\bzero}{\boldsymbol{0}}
\newcommand{\xar}[1]{x^{(#1)}}
\newcommand{\zar}[1]{z^{(#1)}}

\newcommand{\Symm}{\mathfrak{S}}
\newcommand{\abs}[1]{\lvert#1\rvert}

\newcommand{\la}{\lambda}

\newcommand{\obinomE}{\genfrac\langle\rangle{0pt}{}}

\newcommand{\spec}[1]{\langle #1\rangle}
\newcommand{\Spec}[1]{\big\langle #1\big\rangle}
\newcommand{\A}{\mathrm A}
\newcommand{\mur}[1]{\mu^{(#1)}}
\newcommand{\lar}[1]{\lambda^{(#1)}}
\newcommand{\DeltaSv}{\Delta_{\mathrm{S}}^{(\mathrm{v})}}
\newcommand{\DeltaSe}{\Delta_{\mathrm{S}}^{(\mathrm{e})}}
\newcommand{\DeltaS}{\Delta_{\mathrm{S}}}
\newcommand{\DeltaD}{\Delta_{\mathrm{D}}}
\newcommand{\Part}{\mathscr{P}}
\newcommand{\Gammapq}{\Gamma_{p,q}}

\newcommand{\thetaq}{\theta_q}
\newcommand{\thetap}{\theta_p}
\newcommand{\Ri}{R^{\ast}}

\begin{document}

\title[Elliptic Selberg integrals]{Elliptic $\A_n$ Selberg integrals}
\author{Seamus P. Albion}
\address{Fakult\"{a}t f\"{u}r Mathematik, Universit\"{a}t Wien, 
Oskar-Morgenstern-Platz 1, A-1090 Vienna, Austria}
\email{seamus.albion@univie.ac.at}

\author{Eric M. Rains}
\address{Department of Mathematics, California Institute of Technology,
Pasadena, CA 91125, USA} 
\email{rains@caltech.edu}
 
\author{S. Ole Warnaar}                                                         
\address{School of Mathematics and Physics,
The University of Queensland, Brisbane, QLD 4072,
Australia}
\email{o.warnaar@maths.uq.edu.au}

\begin{abstract}
We use the elliptic interpolation kernel due to the second 
author to prove an $\A_n$ extension of the elliptic Selberg integral.
More generally, we obtain elliptic analogues of the $\A_n$ Kadell,
Hua--Kadell and Alba--Fateev--Litvinov--Tarnopolsky (or AFLT) integrals.
\end{abstract}

\maketitle

\section{Introduction}

In his famous 1944 paper \cite{Selberg44}, Atle Selberg evaluated the following 
multivariate extension of Euler's beta integral that now bears his name.
For $k$ a positive integer,
\begin{align}\label{Eq_Selberg}
S_k(\alpha,\beta;\gamma)&:=\Int_{[0,1]^k}\!
\prod_{i=1}^k x_i^{\alpha-1}(1-x_i)^{\beta-1}
\prod_{1\leq i<j\leq k}\abs{x_i-x_j}^{2\gamma}\,\dup x_1\cdots\dup x_k \\
&\hphantom{:}=\prod_{i=1}^k
\frac{\Gamma(\alpha+(i-1)\gamma)\Gamma(\beta+(i-1)\gamma)
\Gamma(1+i\gamma)}{\Gamma(\alpha+\beta+(k+i-2)\gamma)\Gamma(1+\gamma)},
\notag
\end{align}
where $\alpha,\beta,\gamma\in\mathbb{C}$ such that
$\Re(\alpha)>0$, $\Re(\beta)>0$ and
\[
\Re(\gamma)>-\min\{1/k,\Re(\alpha)/(k-1),\Re(\beta)/(k-1)\}.
\]
The Selberg integral has come to be regarded as one of the most
fundamental hypergeometric integrals, a reputation which is upheld by
its appearance in numerous different areas of mathematics such as 
random matrix theory \cite{BBAP05,Forrester10,FR05,Mehta04}, 
analytic number theory \cite{Anderson90,Evans91,FZ19,KS00a,KS00b},
enumerative combinatorics \cite{KNPV15,KO17,KS17,Stanley11}, and conformal
field theory \cite{AFLT11,DF84,EFJ98,MMS12,MV00,SV91,TV03,TV20,Varchenko03}.
For a review of the history and mathematics surrounding Selberg's integral
the reader is referred to~\cite{FW08}.

There are many important generalisations of the Selberg integral.
One of the goals of this paper is to unify most of these by proving an
elliptic analogue of the Selberg integral for the Lie algebra $\A_n$,
as well as elliptic analogues of the more general Kadell, Hua--Kadell
and AFLT integrals for $\A_n$. 
Before we describe the first of these generalisations, we remind the reader
of the elliptic analogue of the ordinary (or $\A_1$) Selberg integral and
of the (non-elliptic) $\A_n$ Selberg integral.

Fix $p,q\in\mathbb{C}$ such that $\abs{p},\abs{q}<1$, and let
\[
\Gammapq(z):=\prod_{i,j=0}^{\infty}\frac{1-p^{i+1}q^{j+1}/z}{1-p^iq^jz}
\]
be the elliptic Gamma function~\cite{Ruijsenaars97}.
This function, which has zeros at $p^{\mathbb{N}_0+1}q^{\mathbb{N}_0+1}$,
poles at $p^{-\mathbb{N}_0}q^{-\mathbb{N}_0}$ and an essential
singularity at the origin, is symmetric in $p$ and $q$ and satisfies the
reflection formula
\begin{equation}\label{Eq_reflection}
\Gammapq(z)\Gammapq(pq/z)=1.
\end{equation}
As is by now standard, in the following 
we adopt the multiplicative shorthand notation $\Gammapq(z_1,\dots,z_n):=
\Gammapq(z_1)\cdots\Gammapq(z_n)$ as well as the plus-minus notation
\begin{align*}
\Gammapq(az^{\pm})&:=\Gammapq(az,az^{-1}), \\
\Gammapq(az^{\pm}w^{\pm})&:=
\Gammapq(azw,az^{-1}w,azw^{-1},az^{-1}w^{-1}).
\end{align*}
Again assuming that $\abs{q}<1$, let
$(a;q)_{\infty}:=\prod_{i\geq 0}(1-aq^i)$ be the 
infinite $q$-shifted factorial.
Then the elliptic Selberg density is defined as \cite{Rains09,Spiridonov03}
\begin{equation}\label{Eq_Selberg-density}
\DeltaSv(z_1,\dots,z_k;t_1,\dots,t_m;t;p,q)
:=\varkappa_k\prod_{1\leq i<j\leq k}\frac{\Gammapq(tz_i^{\pm}z_j^{\pm})}
{\Gammapq(z_i^{\pm}z_j^{\pm})}
\prod_{i=1}^k\frac{\Gammapq(t)\prod_{r=1}^m\Gammapq(t_rz_i^{\pm})}
{\Gammapq(z_i^{\pm 2})},
\end{equation}
where  $z_1,\dots,z_k,t,t_1,\dots,t_m\in\mathbb{C}^{\ast}$ and \begin{equation}\label{Eq_kappa-n}
\varkappa_k:=\frac{(p;p)^k_{\infty}(q;q)^k_{\infty}}{2^kk!(2\pi\iup)^k}.
\end{equation}
The use of the superscript $(\textrm{v})$ is non-standard.
Later we also need a companion density $\DeltaSe(\dots;\dots;c;p,q)$,
and the superscripts $(\textrm{v})$
and $(\textrm{e})$ --- $\textrm{v}$ for vertex and $\textrm{e}$ for edge
of the $\A_n$ Dynkin diagram --- have been added to avoid confusion.
Assuming $0<\abs{t},\abs{t_1},\dots,\abs{t_6}<1$ as well as the
balancing condition $t^{2k-2}t_1\cdots t_6=pq$, the elliptic Selberg
integral corresponds to
\begin{equation}\label{Eq_ESelberg}
\Int_{\mathbb{T}^k}\!  \DeltaSv(z_1,\dots,z_k;t_1,\dots,t_6;t;p,q)
\,\frac{\dup z_1}{z_1}\cdots\frac{\dup z_k}{z_k}
=\prod_{i=1}^k\bigg(\Gammapq(t^i)
\prod_{1\leq r<s\leq 6}\Gammapq(t^{i-1}t_rt_s)\bigg),
\end{equation}
where $\mathbb{T}^k$ denotes the complex $k$-torus.
For $k=1$ the above integral is Spiridonov's elliptic beta
integral~\cite{Spiridonov01}. 
For general $k$ the integral evaluation \eqref{Eq_ESelberg} was conjectured
by van Diejen and Spiridonov \cite{vDS00,vDS01} and proved by the 
second author~\cite{Rains10}.
Alternative proofs have since been given by Spiridonov \cite{Spiridonov07} 
and by Ito and Noumi~\cite{IN17}.
A rigorous proof that \eqref{Eq_ESelberg} simplifies to the Selberg integral
\eqref{Eq_Selberg} upon taking appropriate limits was presented in~\cite{Rains09}.

Elliptic beta and Selberg integrals are not just of interest from a special
functions point of view, corresponding to the top-level results in the
classical--basic--elliptic hierarchy of hypergeometric integrals.
In 2009 Dolan and Osborn \cite{DO09} made the important discovery that 
supersymmetric indices of supersymmetric $4$-dimensional quantum field
theories take the form of elliptic hypergeometric integrals. 
As a consequence, many conjectural Seiberg dualities for such quantum field
theories imply transformation formulae for the corresponding indices, and
hence for elliptic hypergeometric integrals.
Since this discovery, elliptic hypergeometric integrals and their transformation
properties play an important role in the study of dualities in quantum field
theory, see e.g.,
\cite{GPRR10,GRRY10,NNR21,RR17,SV11,SV12,SV14,SW06}.
Another surprising application of elliptic hypergeometric integrals
--- not unrelated to the supersymmetric dualities, see the survey
\cite{Gahramanov22} --- has been the construction of novel Yang--Baxter solvable
models with continuous spin parameters \cite{BKS13,BS12a,BS12b,Spiridonov10}, 
generalising many famous exactly solvable discrete spin models such as the Ising
and chiral Potts models.
These connections between elliptic hypergeometric integrals and quantum field
theory and integrable systems provide further motivation for generalising
the integral evaluation \eqref{Eq_ESelberg} to $\A_n$.

To succinctly describe the non-elliptic $\A_n$ Selberg integral, we define
\[
\Delta(x):=\prod_{1\leq i<j\leq k}(x_i-x_j) \quad\text{and}\quad
\Delta(x;y):=\prod_{i=1}^k \prod_{j=1}^{\ell} (x_i-y_j)
\]
for $x=(x_1,\dots,x_k)$ and $y=(y_1,\dots,y_{\ell})$.
Let $0=:k_0\leq k_1\leq \cdots\leq k_n$ be nonnegative integers and,
for $1\leq r\leq n$, denote by
$\xar{r}=\big(\xar{r}_1,\dots,\xar{r}_{k_r}\big)$ a $k_r$-tuple of
integration variables.
Further let $\alpha_1,\dots,\alpha_n,\beta,\gamma\in\mathbb{C}$ satisfy
\begin{subequations}\label{Eq_AnSelberg-conditions}
\begin{gather}
\Re(\beta)>0, \qquad k_n \abs{\Re(\gamma)}<1,\qquad
\Re\big(\beta+(k_n-1)\gamma\big)>0, \\
\Re\big(\alpha_r+\cdots+\alpha_s+(r-s+i-1)\gamma\big)>0 \quad
\text{for $1\leq r\leq s\leq n$ and $1\leq i\leq k_r-k_{r-1}$}.
\end{gather}
\end{subequations}
Then the $\A_n$ Selberg integral refers to the integral evaluation
\begin{align}\label{Eq_AnSelberg}
&\Int_{C^{k_1,\dots,k_n}_{\gamma}[0,1]}
\prod_{r=1}^n\bigg(
\big\vert\Delta\big(\xar{r}\big)\big\vert^{2\gamma}
\prod_{i=1}^{k_r}\big(\xar{r}_i\big)^{\alpha_r-1}
\big(1-\xar{r}_i\big)^{\beta_r-1}\bigg)\\
&\qquad\qquad\qquad\qquad\qquad\times
\prod_{r=1}^{n-1}
\big\vert \Delta\big(\xar{r};\xar{r+1}\big)\big\vert^{-\gamma}\,
\dup\xar{1}\cdots\dup\xar{n} \notag \\
&\quad=\prod_{r=1}^n\prod_{i=1}^{k_r}
\frac{\Gamma(i\gamma)\Gamma(\beta_r+(i-k_{r+1}-1)\gamma)}
{\Gamma(\gamma)}\notag \\
&\qquad\times \prod_{1\leq r\leq s\leq n}\prod_{i=1}^{k_r-k_{r-1}}
\frac{\Gamma(\alpha_r+\cdots+\alpha_s+(r-s+i-1)\gamma)}
{\Gamma(\beta_s+\alpha_r+\cdots+\alpha_s+(k_s-k_{s+1}+i+r-s-2)\gamma)},\notag
\end{align}
where $\beta_1=\cdots=\beta_{n-1}=1$, $\beta_n:=\beta$ and $k_{n+1}:=0$.

The origin of the restrictions $\beta_1=\cdots=\beta_{n-1}=1$ and
$k_1\leq\dots\leq k_n$ is representation theoretic.
Let $\mathfrak{g}:=\mathfrak{sl}_{n+1}$, $\mathfrak{h}$ the Cartan subalgebra 
of $\mathfrak{g}$ and $\mathfrak{h}^{\ast}$ its dual.
For $I:=\{1,\dots,n\}$, let
$\{\alpha_i\}_{i\in I}\in\mathfrak{h}^{\ast}$,
$\{\omega_i\}_{i\in I}\in\mathfrak{h}^{\ast}$ and
$\{\alpha_i^{\vee}\}_{i\in I}\in\mathfrak{h}$
be the set of simple roots, fundamental weights and
simple coroots of $\mathfrak{g}$, so that 
$\langle \alpha_i^{\vee},\omega_j\rangle=\delta_{i,j}$.
Finally, let $P_{+}\subset\mathfrak{h}^{\ast}$ be the set of dominant 
integral weights, i.e., $\mu\in P_{+}$ if 
$\langle\mu,\alpha_i^{\vee}\rangle\in\mathbb{N}_0$ for all $i\in I$.
Now fix $\mu=\sum_{i\in I}(\mu_i-1)\omega_i\in P_{+}$ and 
$\nu=\sum_{i=I}(\nu_i-1)\omega_i\in P_{+}$
such that $\nu_1=\dots=\nu_{n-1}=1$,
and let $V_{\mu}$ and $V_{\nu}$ be two irreducible $\mathfrak{g}$-modules
of highest weight $\mu$ and $\nu$ respectively.
Then the following multiplicity-free tensor-product decomposition holds:
\[
V_{\mu}\otimes V_{\nu}=
\bigoplus_{\substack{0\leq k_1\leq\cdots\leq k_n\\[1pt] 
\mu+\nu-\sum_{i\in I} k_i\alpha_i\in P_{+}}}
V_{\mu+\nu-\sum_{i=1}^n k_i\alpha_i}.
\]
The $\alpha_1,\dots,\alpha_n$ and $\beta_n$ in 
\eqref{Eq_AnSelberg} are essentially continuous
analogues of $\mu_1,\dots,\mu_n$ and $\nu_n$, respectively,
and $\beta_i=\nu_i=1$ for all $1\leq i\leq n-1$.

The domain of integration $C_{\gamma}^{k_1,\dots,k_n}[0,1]$ in
\eqref{Eq_AnSelberg} takes the form of a $(k_1+\dots+k_n)$-dimensional chain.
Its precise form is not needed in this paper, and the interested reader is
referred to~\cite{ARW21,TV03,Warnaar08,Warnaar09} for details.
For $n=1$ the integration chain is independent of $\gamma$ and simplifies to
the $k$-simplex
\[
C_{\gamma}^k[0,1]=\big\{x\in\mathbb{R}^k:0<x_1<\dots<x_k<1\big\}.
\]
Up to a factor of $k!$, the $n=1$ case of \eqref{Eq_AnSelberg} is 
thus the original Selberg integral \eqref{Eq_Selberg}.
For $n=2$ the evaluation \eqref{Eq_AnSelberg} was first given by Tarasov and
Varchenko \cite{TV03}, and for general $n$ it is due to the third 
author~\cite{Warnaar09}.
There is also a finite field analogue of \eqref{Eq_AnSelberg} due to
Rim\'anyi and Varchenko \cite{RV21} which is not covered in our elliptic
generalisation below.

To state the elliptic $\A_n$ Selberg integral we introduce some further
notation.
For $z=(z_1,\dots,z_k)\in(\mathbb{C}^{\ast})^k$, 
$w=(w_1,\dots,w_\ell)\in(\mathbb{C}^{\ast})^{\ell}$ and
$c\in\mathbb{C}^{\ast}$, define
\begin{equation}\label{Eq_DeltaSe}
\DeltaSe(z;w;c;p,q):=
\prod_{i=1}^k\prod_{j=1}^{\ell}\Gammapq\big(cz_i^{\pm}w_j^{\pm}\big).
\end{equation}
Whereas the elliptic Selberg density \eqref{Eq_Selberg-density}
should be viewed as the elliptic analogue of the integrand of the Selberg
integral \eqref{Eq_Selberg}, the above function for $c=(pq/t)^{1/2}$
plays the role of $\abs{\Delta(x;y)}^{-\gamma}$ in the elliptic analogue of
\eqref{Eq_AnSelberg}.
This same special case of \eqref{Eq_DeltaSe} previously appeared in the study 
of elliptic integrable systems, see e.g., \cite{AN22,KNS09,Ruijsenaars09}
and, as shown in \cite{KNS09,Ruijsenaars09}, satisfies a remarkable duality 
with respect to the $8$-parameter van Diejen difference 
operator \cite{vDiejen94}.

We now combine the two elliptic Selberg densities to form the
$\A_n$ elliptic Selberg density
\begin{align}\label{Eq_An-S-density}
&\DeltaS\big(\zar{1},\dots,\zar{n};t_1,\dots,t_{2n+4};c;t;p,q\big)\\
&\quad:=\prod_{r=1}^{n-1}\Big(\DeltaSv\big(\zar{r};
c^{r-n}t_{2r-1},c^{r-n}t_{2r},
tc^{n-r}/t_{2r+1},tc^{n-r}/t_{2r+2};t;p,q\big) \notag \\
&\qquad\qquad\quad\times
\DeltaSe\big(\zar{r};\zar{r+1};c;p,q\big)\Big) \notag \\
&\qquad\times
\DeltaSv\big(\zar{n};
t_{2n-1},t_{2n},t_{2n+1},t_{2n+2},t_{2n+3},t_{2n+4};t;p,q\big), \notag
\end{align}
where $\zar{r}=\big(\zar{r}_1,\dots,\zar{r}_{k_r}\big)$.
Suppressing the dependence on $c,t,t_1,\dots,t_{2n+4},p,q$, the individual 
densities making up the $\A_n$ density should be thought of as corresponding
to the vertices and edges of the $\A_n$ Dynkin diagram as follows:
\medskip
\begin{center}
\begin{tikzpicture}[scale=1]
\begin{scope}[draw=blue]
\draw (0,0)--(6.5,0); \draw (8.5,0)--(12,0);
\draw[dashed] (6.5,0)--(8.5,0);
\draw[fill=white] (0,0) circle (0.21cm);
\draw[fill=white] (3,0) circle (0.21cm); 
\draw[fill=white] (6,0) circle (0.21cm);
\draw[fill=white] (9,0) circle (0.21cm);
\draw[fill=white] (12,0) circle (0.21cm);
\end{scope}
\draw (0,0) node {$\scriptscriptstyle 1$};
\draw (3,0) node {$\scriptscriptstyle 2$};
\draw (6,0) node {$\scriptscriptstyle 3$};
\draw (9,0) node {$\scriptscriptstyle n\!-\!1$};
\draw (12,0) node {$\scriptscriptstyle n$};
\draw (0,0.6) node {$\scriptstyle\DeltaSv(\zar{1})$};
\draw (3,0.6) node {$\scriptstyle\DeltaSv(\zar{2})$};
\draw (6,0.6) node {$\scriptstyle\DeltaSv(\zar{3})$};
\draw (9,0.6) node {$\scriptstyle\DeltaSv(\zar{n-1})$};
\draw (12,0.6) node {$\scriptstyle\DeltaSv(\zar{n})$};
\draw (1.5,-0.4) node {$\scriptstyle\DeltaSe(\zar{1},\zar{2})$};
\draw (4.5,-0.4) node {$\scriptstyle\DeltaSe(\zar{2},\zar{3})$};
\draw (10.5,-0.4) node {$\scriptstyle\DeltaSe(\zar{n-1},\zar{n})$};
\end{tikzpicture}
\end{center}
Finally, for $z=(z_1,\dots,z_k)$, we let
$\frac{\dup z}{z}:=\frac{\dup z_1}{z_1}\cdots\frac{\dup z_k}{z_k}$.

\begin{theorem}[$\A_n$ elliptic Selberg integral]\label{Thm_E-AnSelberg}
Let $n$ be a positive integer and $k_1,\dots,k_n$ integers such that
$0=:k_0\leq k_1 \leq\cdots \leq k_n$. 
For $p,q,t\in\mathbb{C}^{\ast}$ such that $\abs{p},\abs{q},\abs{t},\abs{pq/t}<1$,
fix a branch of $c:=(pq/t)^{1/2}$, and let
$t_1,\dots,t_{2n+4}\in\mathbb{C}^{\ast}$ such that the balancing condition 
\begin{equation}\label{Eq_An-balancing}
t^{k_r-k_{r-1}+k_n-2}t_{2r-1}t_{2r}t_{2n+1}t_{2n+2}t_{2n+3}t_{2n+4}=pq
\end{equation}
holds for all $1\leq r\leq n$.
Then
\begin{align}\label{Eq_E-AnSelberg}
&\int \DeltaS\big(\zar{1},\dots,\zar{n};t_1,\dots,t_{2n+4};c;t;p,q\big)\,
\frac{\dup \zar{1}}{\zar{1}}\cdots\frac{\dup \zar{n}}{\zar{n}} \\
&\quad= \prod_{r=1}^n \prod_{i=1}^{k_r-k_{r-1}}
\Gammapq(t^i,t^{i-1}c^{2r-2n}t_{2r-1}t_{2r})
\prod_{2n+1\leq r<s\leq 2n+4}\,\prod_{i=1}^{k_n}
\Gammapq(t^{i-1}t_rt_s) \notag \\[1mm]
&\qquad\times \prod_{1\leq r<s\leq n} \prod_{i=1}^{k_r-k_{r-1}}
\Gammapq(t^it_{2r-1}/t_{2s-1},t^it_{2r}/t_{2s-1},
t^it_{2r-1}/t_{2s},t^it_{2r}/t_{2s}) \notag \\[1mm] 
&\qquad\times
\prod_{r=1}^n\prod_{s=2n+1}^{2n+4}\prod_{i=1}^{k_r-k_{r-1}}
\Gammapq(t^{i-1}t_{2r-1}t_s,t^{i-1}t_{2r}t_s), \notag
\end{align}
where $\zar{r}=\big(\zar{r}_1,\dots,\zar{r}_{k_r}\big)$ for all $1\leq r\leq n$.
\end{theorem}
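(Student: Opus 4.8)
The natural plan is to prove Theorem~\ref{Thm_E-AnSelberg} by induction on $n$, using the elliptic Selberg integral \eqref{Eq_ESelberg} as the base case ($n=1$) and an integration kernel to pass from $\A_{n-1}$ to $\A_n$. The key analytic input should be an identity expressing the one-variable-block integral
\[
\int \DeltaSv\big(\zar{n};u_1,\dots,u_6;t;p,q\big)\,\DeltaSe\big(\zar{n-1};\zar{n};c;p,q\big)\,\frac{\dup\zar{n}}{\zar{n}}
\]
as a product of elliptic Gamma functions times a \emph{lower} $\A_{n-1}$ density in the variables $\zar{n-1}$, with shifted parameters. In other words, integrating out the last node of the Dynkin diagram against the edge kernel $\DeltaSe$ should reproduce exactly the vertex density $\DeltaSv(\zar{n-1};\dots)$ attached to node $n-1$, up to a prefactor. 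This is precisely the role of the elliptic interpolation kernel of the second author referenced in the abstract: it is the tool that implements the ``branching'' $\A_n\to\A_{n-1}$ and is the elliptic lift of the integral transforms used by Tarasov--Varchenko and the third author in the classical case. The balancing condition \eqref{Eq_An-balancing} for the index $r=n$ is exactly what is needed for \eqref{Eq_ESelberg} (with $k=k_n$) to apply to the $\zar{n}$-integral once the six parameters $t_{2n-1},\dots,t_{2n+4}$ are in place, and for $r<n$ it tracks how the shifted parameters $c^{r-n}t_{2r-1}$, $tc^{n-r}/t_{2r+1}$, etc., remain balanced after the reduction.

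The steps, in order, would be: (1) Record the needed properties of the interpolation kernel and the corresponding integral identity (an ``adjoint'' or ``reproducing'' formula) that rewrites $\DeltaSv\cdot\DeltaSe$ integrated over one block in terms of a shifted $\DeltaSv$; this is where all the representation-theoretic structure (the constraint $\nu_1=\dots=\nu_{n-1}=1$, mirrored here by the special shape of the parameters $c^{r-n}t_{2r-1},c^{r-n}t_{2r}$) enters. (2) Verify that the prefactor produced by one application of this identity matches the corresponding factors on the right-hand side of \eqref{Eq_E-AnSelberg}: namely the $r=n$ contribution to $\prod_{i=1}^{k_n-k_{n-1}}\Gammapq(t^i,\dots)$, the $s=n$ terms of the double product over $1\le r<s\le n$, and the $\prod_{s=2n+1}^{2n+4}$ factors with $r=n$. (3) Check that after the reduction the remaining integrand is the $\A_{n-1}$ density with parameters $t_1,\dots,t_{2n-2}$, $t_{2n+1},\dots,t_{2n+4}$ suitably renamed and with $c$ unchanged, and that the balancing conditions \eqref{Eq_An-balancing} for $1\le r\le n-1$ are inherited. (4) Apply the inductive hypothesis and reassemble the product, checking the bookkeeping of exponents of $t$ and of $c$ term by term.

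The main obstacle is step~(1): producing and justifying the precise one-block integral identity, i.e.\ the statement that the elliptic interpolation kernel conjugates the van~Diejen-type density $\DeltaSv$ with one set of parameters into $\DeltaSv$ with a contiguously shifted set, with an explicitly computable elliptic Gamma prefactor. This requires (a) the correct normalisation of the kernel, (b) a contour-shifting / analytic-continuation argument so that the torus integrals are legitimate for the stated parameter ranges ($\abs{t},\abs{pq/t}<1$ and the implicit smallness of the $t_i$), and (c) care with the choice of branch of $c=(pq/t)^{1/2}$, since half-integer powers of $c$ appear in the shifted parameters and one must ensure consistency of these choices across all $n-1$ reduction steps. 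A secondary but nontrivial obstacle is the combinatorial verification in step~(4): the right-hand side of \eqref{Eq_E-AnSelberg} has four multi-products with overlapping ranges, and one must confirm that peeling off node $n$ extracts exactly the ``boundary'' terms of each and leaves the $\A_{n-1}$ version intact — in particular that the product $\prod_{2n+1\le r<s\le 2n+4}\prod_{i=1}^{k_n}\Gammapq(t^{i-1}t_rt_s)$ correctly splits as the $i\le k_{n-1}$ part (absorbed into the $\A_{n-1}$ answer after reindexing) plus the $k_{n-1}<i\le k_n$ part (coming from the single elliptic Selberg integral \eqref{Eq_ESelberg} used in the reduction).
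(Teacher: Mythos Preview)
Your inductive plan is the right shape, but step~(1) fails in the precise form you state, and this is not a technicality. The integral
\[
\int \DeltaSv\big(\zar{n};t_{2n-1},\dots,t_{2n+4};t\big)\,
\DeltaSe\big(\zar{n-1};\zar{n};c\big)\,\frac{\dup\zar{n}}{\zar{n}}
\]
is \emph{not} a product times a vertex density $\DeltaSv(\zar{n-1};\dots)$: as a function of $\zar{n-1}$ it does not factor over the $\zar{n-1}_i$. What such a one-block integral actually produces is (up to a prefactor) an \emph{interpolation kernel} $\mathcal{K}_{c'}(\zar{n-1};\dots)$ with a padded second argument, cf.\ Corollary~\ref{Cor_kernel-decomp-2}. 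So after one step you are no longer looking at an $\A_{n-1}$ Selberg density, and the induction does not close on the original family of integrals. The balancing count also flags this: the $\zar{n}$-integrand has $6+2k_{n-1}$ parameters, which is outside the scope of \eqref{Eq_ESelberg}.

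The paper's fix is exactly to accept this and \emph{enlarge} the inductive statement so that a kernel is carried along. Concretely, one replaces the two parameters $c^{1-n}t_1,c^{1-n}t_2$ at node~$1$ by a kernel $\mathcal{K}_d(\zar{1};x)$ (Proposition~\ref{Prop_xSelberg}), and the recursion then integrates out $\zar{1}$, not $\zar{n}$, via Corollary~\ref{Cor_kernel-decomp-2}; the kernel gets pushed to node~$2$ with its second argument padded by a geometric string, and one iterates. The direction matters: the kernel-decomposition identity requires the integrated block to have \emph{no more} variables than the adjacent external block, and since $k_1\leq k_2\leq\dots\leq k_n$ this forces the recursion to eat nodes from the small end. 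The base case is then not the plain elliptic Selberg integral \eqref{Eq_ESelberg} but the $n=1$ kernel integral of Theorem~\ref{Thm_key}, after which one specialises $x$ via \eqref{Eq_kernel-spec} to recover \eqref{Eq_E-AnSelberg}. Your step~(4) bookkeeping is genuinely needed, but only once the induction is reformulated this way.
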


The $(k_1+\dots+k_n)$-dimensional contour of integration of the 
$\A_n$ Selberg integral has the product structure
\[
\underbrace{C_1\times\cdots\times C_1}_{\text{$k_1$-times}}\times 
\underbrace{C_2\times\cdots\times C_2}_{\text{$k_2$-times}}\times\cdots\cdots\times
\underbrace{C_n\times\cdots\times C_n}_{\text{$k_n$-times}},
\]
where $C_r$ for each $1\leq r\leq n$ is a positively oriented smooth
Jordan curve around $0$ such that $C_r=C_r^{-1}$.
Moreover, for $1\leq r\leq n-1$, the elements of the sets
\begin{subequations}
\begin{equation}
\label{Eq_in1}
c^{r-n} t_{2r+s-2} p^{\mathbb{N}_0} q^{\mathbb{N}_0},\quad
tc^{n-r} t_{2r+s}^{-1} p^{\mathbb{N}_0} q^{\mathbb{N}_0}
\; (1\leq s\leq 2),\quad
tp^{\mathbb{N}_0} q^{\mathbb{N}_0}C_r,\quad
cp^{\mathbb{N}_0} q^{\mathbb{N}_0}C_{r\pm 1}
\end{equation}
all lie in the interior of $C_r$, and the elements of
\begin{equation}\label{Eq_in2}
t_{s+2n-2} p^{\mathbb{N}_0} q^{\mathbb{N}_0} \; (1\leq s\leq 6),\quad
tp^{\mathbb{N}_0} q^{\mathbb{N}_0}C_n,\quad
cp^{\mathbb{N}_0} q^{\mathbb{N}_0}C_{n-1}
\end{equation}
\end{subequations}
all lie in the interior of $C_n$, where $C_0:=0$.
These conditions on the $C_r$ in particular imply that 
$c^2C_r\in\textrm{int}(C_r)$ for $2\leq r\leq n$, explaining why
$\abs{c^2}=\abs{pq/t}<1$.
For $n=1$ this restriction can obviously be dropped.
Furthermore, for $n=1$ the balancing condition \eqref{Eq_An-balancing}
simplifies to $t^{2k_1-2}t_1t_2\cdots t_6=pq$.
Taking $\abs{t_1},\dots,\abs{t_6}<1$ it then follows that \eqref{Eq_in2}
is satisfied for $C=\mathbb{T}$, so that the integral reduces to
\eqref{Eq_ESelberg}.
For $n\geq 2$ it is generally not possible to restrict the parameters
such that $C_r=\mathbb{T}$ for all $1\leq r\leq n$.
For example, if $C_r=\mathbb{T}$ for all $r$, it follows from \eqref{Eq_in1}
that $c^{r-n} t_{2r-1}, c^{r-n} t_{2r},tc^{n-r} t_{2r+1}^{-1},
tc^{n-r} t_{2r+2}^{-1}$ all lie in the interior of $\mathbb{T}$.
By \eqref{Eq_An-balancing} and $\abs{t}<1$ this would impose the
condition that $k_{r+1}-2k_r+k_{r-1}\geq -1$ for all $1\leq r\leq n-1$.

All of the integral formulas listed thus far admit generalisations in which 
the integrand is multiplied by an appropriate symmetric function or 
$\mathrm{BC}_n$-symmetric function.
In the case of \eqref{Eq_Selberg} the most general such integral was 
discovered by Alba, Fateev, Litvinov and Tarnopolsky (AFLT) \cite{AFLT11}
and contains a pair of Jack polynomials in the integrand.
The AFLT integral includes the well-known Kadell integral \cite{Kadell97} 
(which contains one Jack polynomial) 
and the Hua--Kadell integral \cite{Hua63,Kadell93}
(which contains two Jack polynomials but assumes $\beta=\gamma$) 
as special cases.
In our previous paper \cite{ARW21} the AFLT integral was extended to the
elliptic case, as well as to $\A_n$.
In Section~\ref{Sec_AFLT-proof} we unify both these results by proving an
elliptic $\A_n$ AFLT integral.
In this integral the Jack polynomials in the integrand of the non-elliptic
$\A_n$ AFLT integral are replaced by a pair of elliptic interpolation
functions~\cite{Rains12}.
Our approach to the elliptic $\A_n$ Selberg and AFLT integrals is based on
a recursion for a generalisation of the elliptic interpolation
functions, known as the elliptic interpolation kernel~\cite{Rains18}.
This differs from the approaches taken in \cite{ARW21}, where the
non-elliptic $\A_n$ AFLT integral is proved using Cauchy-type
identities for Macdonald polynomials and the $\A_1$ elliptic AFLT 
integral is proved using known integral identities for elliptic 
interpolation functions.

\medskip

The remainder of the paper is organised as follows.
In the next section we review some standard definitions and notation from
the theory of elliptic beta integrals.
Section~\ref{Sec_elliptic-funcs} is devoted to several classes of elliptic 
special functions, including the elliptic interpolation functions and the 
elliptic interpolation kernel.
The latter forms the basis of our approach to Theorem~\ref{Thm_E-AnSelberg}.
In Section~\ref{Sec_AFLT-proof} we first discuss the original AFLT integral
and its $\A_n$ analogue, and then state and prove an elliptic 
$\A_n$ AFLT integral.
As a special case this yields Theorem~\ref{Thm_E-AnSelberg}.

\section{Elliptic preliminaries}
Throughout this paper we assume that $p,q\in\mathbb{C^{\ast}}$ such that 
$\abs{p},\abs{q}<1$.

\subsection{Partitions}
A partition $\la=(\la_1,\la_2,\dots)$ is a weakly decreasing sequence
of nonnegative integers
$\la_i$ such that only finitely many $\la_i$ are nonzero.
The nonzero $\la_i$ are called the parts of $\la$, 
and the number of parts is the length of $\la$, denoted by $l(\la)$.
Partitions are identified up to the number of trailing zeroes, so that, 
for example, $(3,1,1)=(3,1,1,0,\dots)$.
We write $\Part$ for the set of all partitions and $\Part_n$ for
the set of all partitions of length at most $n$.
In particular, $\Part_0=\{0\}$, with $0$ the unique partition of $0$.
If the sum of the parts, denoted $\abs{\la}$, is equal to some integer $n$,
then $\la$ is said to be a partition of $n$, which is also
written $\la\vdash n$.
If $\la$ is a partition, we write $(i,j)\in\la$ to mean any pair
of integers $(i,j)$ such that $1\leq i\leq l(\la)$ and $1\leq j\leq\la_i$.
If $\la$ is a partition, its conjugate $\la'$ is defined by
$\la_i':=\abs{\{j\in \mathbb{N}:\la_j\geq i\}}$.
For example $(7,4,2,1,1)'=(5,3,2,2,1,1,1)$. 
For a pair of partitions $\la,\mu$ we write $\mu\subseteq\la$ if
$\mu_i\leq\la_i$ for all $i$.
If $\la,\mu$ further satisfy 
$\la_1\geq\mu_1\geq\la_2\geq\mu_2\geq\cdots$
(i.e., $\mu\subseteq\la$ and $\la'_i-\mu'_i\in\{0,1\}$ for all $i\geq 1$),
we write $\mu\prec\la$.
(In this case the skew shape $\la/\mu$ is known as a horizontal strip.)
 
We refer to elements of $\Part^2$ as bipartitions, and to distinguish
partitions from bipartitions a bold font such as $\bla$ is used for the
latter.
In particular, $\bzero$ denotes the bipartition $(0,0)$.
If $\bla=(\lar{1},\lar{2})$ and $\bmu=(\mur{1},\mur{2})$ are
bipartitions then the notation $\bmu\subseteq\bla$ is shorthand for the
termwise inclusions $\mur{1}\subseteq\lar{1}$ and
$\mur{2}\subseteq\lar{2}$.
The notation $\bmu\prec\bla$ is similarly defined.
For $\bla\in\Part_n^2$, the spectral vector $\spec{\bla}_{n;t;p,q}$ is
given by
\[
\spec{\bla}_{n;t;p,q}:=
\Big(p^{\lar{1}_1}q^{\lar{2}_1}t^{n-1},p^{\lar{1}_2}q^{\lar{2}_2}t^{n-2},
\dots,p^{\lar{1}_{n-1}}q^{\lar{2}_{n-1}}t,p^{\lar{1}_n}q^{\lar{2}_n}\Big),
\]
so that
\[
\Spec{\big(\lar{1},\lar{2}\big)}_{n;t;p,q}=
\Spec{\big(\lar{2},\lar{1}\big)}_{n;t;q,p}.
\] 

\subsection{Elliptic preliminaries}\label{Sec_elliptic}

A key ingredient in the theory of elliptic hypergeometric functions
is the modified theta function, defined as
\[
\thetap(z):=(z;p)_{\infty}(p/z;p)_{\infty},
\]
for $z\in\mathbb{C}^{\ast}$.
This function is quasi periodic along annuli
\begin{equation}\label{Eq_quasi}
\thetap(pz)=-z^{-1}\thetap(z),
\end{equation}
satisfies the symmetry $\thetap(z)=-z\,\thetap(1/z)$,
and features in the functional equation
\begin{equation}\label{Eq_functionalEq}
\Gammapq(pz)=\thetaq(z)\Gammapq(z)
\end{equation}
for the elliptic gamma function.

For $n$ an integer, the elliptic shifted factorial is defined as
\begin{equation}\label{Eq_E-fac}
(z;q,p)_n:=\frac{\Gammapq(q^nz)}{\Gammapq(z)},
\end{equation}
where it is noted that for $n\geq 0$,
\[
(z;q,p)_n=\prod_{i=1}^n\thetap(zq^{i-1}).
\]
The elliptic shifted factorial has three important generalisations
to partitions, given by
\begin{align*}
C^0_{\la}(z;q,t;p)&:=
\prod_{(i,j)\in\la}\thetap\big(zq^{j-1}t^{1-i}\big), \\
C^{+}_{\la}(z;q,t;p)&:=
\prod_{(i,j)\in\la}\thetap\big(zq^{\la_i+j-1}t^{2-\la_j'-i}\big),\\
C^{-}_{\la}(z;q,t;p)&:=
\prod_{(i,j)\in\la}\thetap\big(zq^{\la_i-j}t^{\la_j'-i}\big).
\end{align*}
Note that $C_{\la}^0(z;q,t;p)$ is sometimes denoted $(z;q,t;p)_{\la}$
in the literature on elliptic hypergeometric series.

For all of the functions defined above, condensed notation such as
\[
C^0_{\la}(z_1,\dots,z_k;q,t;p):=
C^0_{\la}(z_1;q,t;p)\cdots C^0_{\la}(z_n;q,t;p)
\]
will be employed.
As further shorthand notation we define the following well-poised ratio
of products of elliptic shifted factorials:
\[
\Delta_{\la}^0(a\vert b_1,\dots,b_n;q,t;p):=
\prod_{i=1}^n\frac{C_\la^0(b_i;q,t;p)}{C_\la^0(pqa/b_i;q,t;p)},
\]
which satisfies the reflection equation
\begin{equation}\label{Eq_Delta-symm}
\Delta_{\la}^0(a\vert b_1,\dots,b_n;q,t;p)=
\frac{1}{\Delta_{\la}^0(a\vert pqa/b_1,\dots,pqa/b_n;q,t;p)}.
\end{equation}

To preserve $p,q$-symmetry in many of the elliptic functions and integrals
considered in this paper, we require an extension of the above definitions
to bipartitions, and for any function
$f_{\la}(a_1,\dots,a_n;q,t;p)$ or $f_{\la/\mu}(a_1,\dots,a_n;q,t;p)$
we define
\begin{subequations}
\begin{align}\label{Eq_bi-extend}
f_{\bla}(a_1,\dots,a_n;t;p,q)&:=
f_{\lar{1}}(a_1,\dots,a_n;p,t;q)
f_{\lar{2}}(a_1,\dots,a_n;q,t;p), \\[1mm]
f_{\bla/\bmu}(a_1,\dots,a_n;t;p,q)&:=
f_{\lar{1}/\mur{1}}(a_1,\dots,a_n;p,t;q) 
f_{\lar{2}/\mur{2}}(a_1,\dots,a_n;q,t;p).
\end{align}
\end{subequations}
Interchanging $p$ and $q$ is thus the same as interchanging the 
two components of $\bla$ and, in the skew case, the two components of $\bmu$.
By \eqref{Eq_reflection} and \eqref{Eq_E-fac} followed by the
use of the quasi periodicity \eqref{Eq_quasi}, it may be shown that
\begin{equation}\label{Eq_Gamma-Delta}
\prod_{i=1}^n \frac{\Gammapq\big(at^{1-i}p^{\lar{1}_i}q^{\lar{2}_i},
bt^{i-1}p^{-\lar{1}_i}q^{-\lar{2}_i}\big)}
{\Gammapq(at^{1-i},bt^{i-1})}=
\Big(\frac{pq}{ab}\Big)^{\sum_{i=1}^n \lar{1}_i\lar{2}_i}\,
\Delta^0_{\bla}(a/b\vert a;t;p,q),
\end{equation}
for $\bla\in\Part_n^2$.

\subsection{The Dixon and Selberg densities}

In addition to the elliptic Selberg density \eqref{Eq_Selberg-density},
we need the elliptic Dixon density
\[
\DeltaD(z_1,\dots,z_k;t_1,\dots,t_m;p,q)
:=\varkappa_k\prod_{1\leq i<j\leq k}\frac{1}{\Gammapq(z_i^{\pm}z_j^{\pm})}
\prod_{i=1}^k\frac{\prod_{r=1}^m\Gammapq(t_rz_i^{\pm})}
{\Gammapq(z_i^{\pm 2})},
\]
with $\varkappa_k$ given in~\eqref{Eq_kappa-n}.
This is related to the Selberg density by
\begin{align*}
&\DeltaSv(z_1,\dots,z_k;t_1,\dots,t_m;t;p,q) \\
&\quad=\DeltaD(z_1,\dots,z_k;t_1,\dots,t_m;t;p,q)\,
\Gammapq^k(t)\prod_{1\leq i<j\leq k}\Gammapq(tz_i^\pm z_j^\pm).
\end{align*}
Apart from possible balancing conditions, or restrictions to certain
subsets of the complex plane, it will be assumed throughout this paper
that parameters such as $t_1,\dots,t_m,p,q,t$ are in generic position.

We say that a function $f:(\mathbb{C}^{\ast})^k\longrightarrow\mathbb{C}$
is $\mathrm{BC}_k$-symmetric if $f(x_1,\dots,x_k)$ is invariant under the
natural action of the hyperoctahedral group
$\Symm_k\ltimes(\mathbb{Z}/2\mathbb{Z})^k$.
For $f$ a $\mathrm{BC}_k$-symmetric meromorphic function and
$t,t_1,\dots,t_6\in\mathbb{C}^{\ast}$ such that $\abs{t}<1$, 
we define the Selberg average of $f$ as
\begin{equation}\label{Eq_A1-average}
\big\langle f\big\rangle_{t_1,\dots,t_6;t;p,q}^k
:=\frac{1}{S_k(t_1,\dots,t_6;t;p,q)}\Int \! f(z)
\DeltaSv(z;t_1,\dots,t_6;t;p,q)\,\frac{\dup z}{z},
\end{equation}
where $S_k(t_1,\dots,t_6;t;p,q)$ denotes the $\A_1$ elliptic Selberg 
integral \eqref{Eq_ESelberg} and where it is assumed that
$t^{2k-2}t_1\cdots t_6=pq$.
The contour of the integral on the right has the form $C^k$, where
$C=C^{-1}$ is a positively oriented smooth Jordan curve around $0$ such
that
\[
t_r p^{\mathbb{N}_0} q^{\mathbb{N}_0} \; (1\leq r\leq 6),\quad
tp^{\mathbb{N}_0} q^{\mathbb{N}_0}C,
\]
as well as any sequence of poles of $f$ tending to zero, excluding those 
cancelled by the univariate part of the Dixon density, all lie in the
interior of $C$.
If $f$ is analytic on $(\mathbb{C}^{\ast})^k$ and
$\abs{t_1},\dots,\abs{t_6}<1$, we may take $C=\mathbb{T}$.

\section{Elliptic interpolation functions and the interpolation kernel}
\label{Sec_elliptic-funcs}

The purpose of this section is to introduce the $\mathrm{BC}_k$-symmetric
elliptic interpolation functions and the closely related interpolation
kernel.
The interpolation functions will play the role of Jack polynomials in our
elliptic analogues of the $\A_n$ AFLT, Kadell and Hua--Kadell integrals.
The interpolation kernel is a crucial ingredient in our proof of the
various elliptic $\A_n$ Selberg integrals, allowing us to establish a
recursion in the rank $n$.

\subsection{Elliptic interpolation functions}\label{Sec_EIF}
Below we give a brief review of the elliptic interpolation functions.
The reader may consult \cite{CG06,Rains06,Rains10,Rains12,Rains18,RW20}
for more complete accounts.

For $\bmu\in\Part_k^2$, $x=(x_1,\dots,x_k)\in(\mathbb{C}^{\ast})^k$ and
$a,b,t\in\mathbb{C}^{\ast}$, the $\mathrm{BC}_k$-symmetric elliptic
interpolation function is denoted by
\[
\Ri_{\bmu}(x;a,b;t;p,q),
\]
and consists of a $q$-elliptic factor and $p$-elliptic factor:
\[
\Ri_{\bmu}(x;a,b;t;p,q)=\Ri_{\mur{1}}(x;a,b;p,t;q)
\Ri_{\mur{2}}(x;a,b;q,t;p).
\]
As usual in symmetric function theory, $\Ri_0(x;a,b;q,t;p)=1$.
We also adopt the convention that 
$\Ri_{\bmu}(x;a,b;t;p,q)=0$ if $\bmu$ is a bipartition such that
$\bmu\not\in\Part_k^2$, i.e., if the length of at least one of 
$\mur{1},\mur{2}$ exceeds $k$.

The fundamental property of the elliptic interpolation functions is 
the vanishing
\[
\Ri_{\bmu}(a\spec{\bla}_{k;t;p,q};a,b;t;p,q)=0
\]
for all $\bla\in\Part_k^2$ such that $\bmu\not\subseteq\bla$.
The $\mathrm{BC}_k$-symmetric interpolation function
$\Ri_{\mu}(x;a,b;q,t;p)$ generalises Okounkov's $\mathrm{BC}_k$-symmetric
interpolation Macdonald polynomial $P^{\ast}_{\mu}(x;q,t,s)$, which
satisfies a similar vanishing property and contains the ordinary Macdonald
polynomial $P_{\mu}(x;q,t)$ as its top-homogeneous degree component; 
see \cite{Okounkov98,Rains05} for details.
The interpolation functions completely factorise under principal 
specialisation:
\[
\Ri_{\bmu}(v\spec{\bzero}_{k;t;p,q};a,b;t;p,q)=
\Delta^0_{\bmu}(t^{k-1}a/b\vert t^{k-1}av,a/v;t;p,q).
\]
If the parameters satisfy $t^kab=pq$ then the interpolation functions
are said to be of Cauchy type and once again factorise:
\begin{equation}\label{Eq_Cauchy-type}
\Ri_{\bmu}(x;a,b;t;p,q)=
\Delta^0_{\bmu}\big(t^{k-1}a/b\big\vert 
t^{k-1}ax_1^{\pm},\dots,t^{k-1}ax_k^{\pm};t;p,q\big).
\end{equation}

The elliptic binomial coefficients 
\[
\obinomE{\bla}{\bmu}_{[a,b];t;p,q}
=\obinomE{\lar{1}}{\mur{1}}_{[a,b];p,t;q}
\obinomE{\lar{2}}{\mur{2}}_{[a,b];q,t;p}
\]
are defined as normalised connection coefficients between 
the elliptic $\mathrm{BC}_k$ interpolation functions:
\begin{align}\label{Eq_connection-coeff}
&\Ri_{\bla}(x;a,b;t;p,q) \\
&\quad=\sum_{\bmu} \obinomE{\bla}{\bmu}_{[t^{k-1}a/b,a/a'];t;p,q}
\frac{\Delta^0_{\bla}(t^{k-1}a/b\vert t^{k-1}aa';t;p,q)}
{\Delta^0_{\bmu}(t^{k-1}a'/b\vert t^{k-1}aa';t;p,q)}\,
\Ri_{\bmu}(x;a',b;t;p,q). \notag 
\end{align}
It may be shown that this definition is independent of the choice of $k$
and that $\obinomE{\bla}{\bmu}_{[a,b];t;p,q}$ vanishes unless
$\bmu\subseteq\bla$. 
Moreover, for $b=t$ there is additional vanishing and
\begin{equation}\label{Eq_verticalstrip}
\obinomE{\bla}{\bmu}_{[a,t];t;p,q}=0 \quad
\text{unless $\bmu\prec\bla$.}
\end{equation}
For notational purposes it is convenient to extending the definition
of the elliptic binomials to
\begin{equation}\label{Eq_Ebinom-v}
\obinomE{\bla}{\bmu}_{[a,b](v_1,\dots,v_k);t;p,q}:=
\frac{\Delta^0_{\bla}(a\vert v_1,\dots,v_k;t;p,q)}
{\Delta^0_{\bmu}(a/b\vert v_1,\dots,v_k;t;p,q)}\,
\obinomE{\bla}{\bmu}_{[a,b];t;p,q}.
\end{equation}
By \eqref{Eq_Delta-symm},
\begin{equation}\label{Eq_Ebinom-reduction}
\obinomE{\bla}{\bmu}_{[a,b](v_1,\dots,v_k,w,pqa/bw);t;p,q}=
\frac{\Delta^0_{\bla}(a\vert w;t;p,q)}
{\Delta^0_{\bla}(a\vert bw;t;p,q)}\,
\obinomE{\bla}{\bmu}_{[a,b](v_1,\dots,v_k);t;p,q}.
\end{equation}
The reader is warned that the elliptic binomial coefficients for
$\bmu=0$ or $\bmu=\bla$ do not simplify to $1$:
\begin{equation}\label{Eq_simplecases}
\obinomE{\bla}{\bzero}_{[a,b];t;p,q}=
\Delta^0_{\bla}(a\vert b;t,p,q)\quad\text{and}\quad
\obinomE{\bla}{\bla}_{[a,b];t;p,q}=
\frac{C^+_{\bla}(a;t;p,q)}{C^+_{\bla}(a/b;t;p,q)}.
\end{equation}
For $b=1$ they trivialise to
\begin{equation}\label{Eq_b=1}
\obinomE{\bla}{\bmu}_{[a,1](v_1,\dots,v_k);t;p,q}=\delta_{\bla\bmu},
\end{equation}
as follows immediately from the definition \eqref{Eq_connection-coeff}.
The elliptic binomial coefficients satisfy an analogue of the elliptic
Jackson sum as follows \cite[Theorem~4.1]{Rains06}
(see also \cite[Equation~(3.7)]{CG06}):
\begin{equation}\label{Eq_Cn-Jackson}
\sum_{\bmu}\Delta^0_{\bmu}(a/b\vert d,e;t;p,q)\,
\obinomE{\bla}{\bmu}_{[a,b];t;p,q}\obinomE{\bmu}{\bnu}_{[a/b,c/b];t;p,q}
=\obinomE{\bla}{\bnu}_{[a,c](bd,be);t;p,q},
\end{equation}
where $bcde=apq$.

Using \eqref{Eq_Ebinom-v}, the connection coefficient formula may be
written more succinctly as
\begin{equation}\label{Eq_connection}
\Ri_{\bla}(x;a,b;t;p,q)
=\sum_{\bmu}\obinomE{\bla}{\bmu}_{[t^{k-1}a/b,a/a'](t^{k-1}aa');t;p,q}
\Ri_{\bmu}(x;a',b;t;p,q).
\end{equation}
Choosing $a'=pq/t^k b$ and using \eqref{Eq_Cauchy-type} implies that
\begin{align}\label{Eq_connection-spec}
&\Ri_{\bla}(x;a,b;t;p,q) \\ 
&\quad=\sum_{\bmu}
\obinomE{\bla}{\bmu}_{[t^{k-1}a/b,t^kab/pq](pqa/tb);t;p,q}
\Delta^0_{\bmu}\big(pq/tb^2\big\vert
pqx_1^{\pm}/tb,\dots,pqx_k^{\pm}/tb;t;p,q\big). \notag
\end{align}

The elliptic binomial coefficients may be used to define suitable
skew analogues of the interpolation functions, and for arbitrary
$\bla,\bnu\in\Part^2$ and $k$ a nonnegative integer,
\begin{align}\label{Eq_Riskew}
&\Ri_{\bla/\bnu}([v_1,\dots,v_{2k}];a,b;t;p,q) \\
&\quad:=\sum_{\bmu}
\Delta^0_{\bmu}(pq/b^2\vert pq/bv_1,\dots,pq/bv_{2k};t;p,q)\,
\obinomE{\bla}{\bmu}_{[a/b,ab/pq];t;p,q}
\obinomE{\bmu}{\bnu}_{[pq/b^2,pqV/ab];t;p,q}, \notag
\end{align}
where $a,b,t,v_1,\dots,v_{2k}\in\mathbb{C}^{\ast}$ and
$V:=v_1\cdots v_{2k}$.
Obviously, we have vanishing unless $\bnu\subseteq\bla$.
It follows from the definition that the skew interpolation functions
are $\mathfrak{S}_{2k}$-symmetric functions, rather than
$\mathrm{BC}_k$-symmetric.
As explained in more detail in \cite{ARW21,Rains12}, the use of the
brackets around the $v_1,\dots,v_{2k}$ is a reflection of the close
connection with plethystic notation, see for example, 
\cite[Equation (6.7)]{ARW21}.
Taking $\bnu=\bzero$ in \eqref{Eq_Riskew} and using \eqref{Eq_simplecases}
it follows that 
\[
\Ri_{\bla/\bzero}([v_1,\dots,v_{2k}];a,b;t;p,q)
\]
is symmetric in $v_1,\dots,v_{2k},a/V$.\label{page_symm}

The definition of the skew interpolation functions combined with the 
elliptic Jackson summation \eqref{Eq_Cn-Jackson} implies the branching rule
\begin{align}\label{Eq_R-branching}
&\Ri_{\bla/\bnu}([v_1,\dots,v_{2k},w_1,w_2];a,b;t;p,q)\\
&\quad=\sum_{\bmu}
\obinomE{\bla}{\bmu}_{[a/b,w_1w_2](a/w_1,a/w_2);t;p,q}
\Ri_{\bmu/\bnu}([v_1,\dots,v_{2k}];a/w_1w_2,b;t;p,q).\notag
\end{align}
Taking $w_1w_2=1$ and using \eqref{Eq_b=1}, this shows that
\begin{equation}\label{Eq_v1v2=1}
\Ri_{\bla/\bmu}([v_1,\dots,v_{2k}];a,b;t;p,q)|_{v_{2k-1}v_{2k}=1}=
\Ri_{\bla/\bmu}([v_1,\dots,v_{2k-2}];a,b;t;p,q).
\end{equation}
By symmetry this extends to any pair of variables whose product is $1$.
Similarly, from \eqref{Eq_Cn-Jackson} with $c=1$ (so that
$\Delta^0_{\bmu}(a/b\vert d,e;t,p,q)=1$) and \eqref{Eq_b=1}, it follows that
\begin{equation}\label{Eq_novariables}
\Ri_{\bla/\bmu}([\;];a,b;t;p,q)=\delta_{\bla\bmu}.
\end{equation}
By \eqref{Eq_R-branching} with $k=0$ this generalises to 
\[
\Ri_{\bla/\bmu}([v_1,v_2];a,b;t;p,q)
=\obinomE{\bla}{\bmu}_{[a/b,v_1v_2](a/v_1,a/v_2);t;p,q}.
\]

Let $v_{2i-1}v_{2i}=t$ for all $1\leq i\leq k$. 
Then, by \eqref{Eq_verticalstrip}, \eqref{Eq_R-branching},
\eqref{Eq_novariables} and induction on $k$, it follows that 
$\Ri_{\bla/\bmu}([v_1,\dots,v_{2k}];a,b;t;p,q)$ vanishes unless
there exist $\bkappa^{(1)},\dots,\bkappa^{(k)}\in\Part^2$ such that
$\bmu\prec\bkappa^{(1)}\prec\cdots\prec\bkappa^{(k)}\prec\bla$.
In particular, for $\bmu=\bzero$ we have vanishing if 
$\bla\not\in \Part_k^2$.

A further consequence of \eqref{Eq_b=1} is that for $ab=pq$
\[
\Ri_{\bla/\bmu}([v_1,\dots,v_{2k}];a,b;t;p,q)
=\Delta^0_{\bla}(a/b\vert a/v_1,\dots,a/v_{2k};t;p,q)\,
\obinomE{\bla}{\bmu}_{[a/b,V];t;p,q},
\]
so that in particular for $ab=pq$,
\begin{equation}\label{Eq_skew-factors}
\Ri_{\bla/\bzero}([v_1,\dots,v_{2k}];a,b;t;p,q)
=\Delta^0_{\bla}(a/b\vert a/v_1,\dots,a/v_{2k},V;t;p,q).
\end{equation}

Specialising $(v_1,\dots,v_{2k};\bnu)$ to
$(x_1,x_1^{-1},\dots,x_k,x_k^{-1};\bzero)$ in \eqref{Eq_Riskew}, using
\eqref{Eq_simplecases}, and then comparing the resulting equation
with \eqref{Eq_connection-spec} yields the nonvanishing case (i.e.,
$\bla\in\Part_k^2$) of
\begin{align*}
&\Ri_{\bla/\bzero}\big([t^{1/2}x_1^{\pm},\dots,t^{1/2}x_k^{\pm}];
t^{k-1/2}a,t^{1/2}b;t;p,q\big) \\
&\quad=\Delta^0_{\bla}(t^{k-1}a/b\vert t^k;t;p,q)
\Ri_{\bla}(x_1,\dots,x_k;a,b;t;p,q).
\end{align*}
The above identity shows that, up to simple factor, the non-skew elliptic
interpolation functions are a special instances of the skew interpolation
functions.

For our purposes it will be convenient to define the hybrid interpolation
function
\begin{align}\label{Eq_Rinew}
& \Ri_{\bmu}(x_1,\dots,x_k;v_1,\dots,v_{2\ell};a,b;t;p,q) \\
&\quad:= \frac{\Ri_{\bmu/\bzero}
\big([t^{1/2}x_1^{\pm},\dots,t^{1/2}x_k^{\pm},
t^{1/2}v_1,\dots,t^{1/2}v_{2\ell}];t^{k-1/2}a,t^{1/2}b;t;p,q\big)}
{\Delta^0_{\bmu}(t^{k-1}a/b\vert t^{k+\ell}v_1\cdots v_{2\ell};t;p,q)}
\notag
\end{align}
for arbitrary $\bmu\in\Part^2$, so that
\[
\Ri_{\bmu}(x_1,\dots,x_k;\text{--}\,;a,b;t;p,q)=
\Ri_{\bmu}(x_1,\dots,x_k;a,b;t;p,q).
\]
By \eqref{Eq_v1v2=1},
\[
\Ri_{\bmu}(x_1,\dots,x_k;v_1,\dots,v_{2\ell};a,b;t;p,q)\vert_
{v_{2\ell-1}v_{2\ell}=1/t}=
\Ri_{\bmu}(x_1,\dots,x_k;v_1,\dots,v_{2\ell-2};a,b;t;p,q),
\]
and, from the definition,
\begin{align*}
&\Ri_{\bmu}(x_1,\dots,x_k;v_1,\dots,v_{2\ell};a,b;t;p,q)
|_{(v_{2\ell-1},v_{2\ell})=(x_{k+1},x_{k+1}^{-1})} \\
&\quad
=\Ri_{\bmu}(x_1,\dots,x_{k+1};v_1,\dots,v_{2\ell-2};a/t,b;t;p,q).
\end{align*}
Also, from \eqref{Eq_skew-factors} it follows that for $t^kab=pq$ the
following generalisation of \eqref{Eq_Cauchy-type} holds:
\begin{align}\label{Eq_Cauchy-type-2}
&\Ri_{\bmu}(x_1,\dots,x_k;v_1,\dots,v_{2\ell};a,b;t;p,q) \\
&\quad=
\Delta^0_{\bmu}\big(t^{k-1}a/b\big\vert t^{k-1}ax_1^{\pm},\dots,t^{k-1}ax_k^{\pm},
t^{k-1}a/v_1,\dots,t^{k-1}a/v_{2\ell};t,p,q\big). \notag
\end{align}
Finally, by \eqref{Eq_R-branching},
\begin{align}\label{Eq_R-branching-2}
&\Ri_{\bla}(x_1,\dots,x_k;v_1,v_2;at,b;t;p,q)\\
&\quad=\sum_{\bmu} 
\obinomE{\bla}{\bmu}_{[t^ka/b,tv_1v_2]
(t^ka/v_1,t^ka/v_2,pqa/tbv_1v_2);t;p,q} \,
\Ri_{\bmu}(x_1,\dots,x_k;a/v_1v_2,b;t;p,q). \notag
\end{align}

Recall our convention that parameters are assumed to be in
generic position.
Then both $\Ri_{\bmu}(x_1,\dots,x_k;a,b;t;p,q)$ and
$\Ri_{\bmu}(x_1,\dots,x_k;v_1,v_2;a,b;t;p,q)$ 
have sequences of poles in the complex $x_i$-plane converging to zero at
\begin{subequations}
\begin{equation}\label{Eq_seq1}
b^{-1}t^{1-j}q^{\mathbb{N}_0+1} p^{\ell},\quad
bt^{j-1}q^{\mathbb{N}_0} p^{-\ell},
\end{equation}
for $1\leq j\leq l(\mur{1})$, $1\leq \ell\leq \mur{1}_i$, and at
\begin{equation}\label{Eq_seq2}
b^{-1}t^{1-j}p^{\mathbb{N}_0+1} q^{\ell},\quad
bt^{j-1}p^{\mathbb{N}_0} q^{-\ell},
\end{equation}
\end{subequations}
for $1\leq j\leq l(\mur{2})$, $1\leq \ell\leq\mur{2}_i$.
By symmetry, it has diverging sequences of poles in the complex $x_i$-plane
at the reciprocals of the above points.

\subsection{The elliptic interpolation kernel}
We now turn our attention to the elliptic interpolation kernel, which was 
introduced by the second author in \cite{Rains18}.
The interpolation kernel generalises the elliptic interpolation
functions and has many remarkable properties, making it a powerful tool
for proving results for elliptic hypergeometric functions.
For more details the interested reader should consult \cite{LRW20,Rains18},
and for applications of the elliptic interpolation kernel to
elliptic hypergeometric integrals and dualities, see e.g., 
\cite{BHPS22a,BHPS22b,CHMPS22,HPS20,PRSZ20,Rains18}.

All the integrals described in this section are of the form
$\int f(z) \frac{\dup z}{z}$,
where $z:=(z_1,\dots,z_k)$, 
$\frac{\dup z}{z}:=\frac{\dup z_1}{z_1}\cdots \frac{\dup z_k}{z_k}$ and
$f(z)$ is $\mathrm{BC}_k$-symmetric.
Moreover, the contour of integration is assumed to always have the
product structure $C^k=C\times C\times\cdots\times C$, where
$C=C^{-1}$ is a positively oriented smooth Jordan curve around $0$
such that a given set of points $I_C$ lies in the interior of $C$.
For each of the integrals below we will explicitly describe this set.

For $x,y\in(\mathbb{C}^{\ast})^k$ and $c,t\in\mathbb{C}^{\ast}$, the elliptic 
interpolation kernel $\mathcal{K}_c(x;y;t;p,q)$ may be defined recursively 
by fixing one of the initial conditions
\[
\mathcal{K}_c(-;-;t;p,q)=1 \quad\text{or}\quad
\mathcal{K}_c(x_1;y_1;t;p,q)=
\frac{\Gammapq(cx_1^{\pm}y_1^{\pm})}{\Gammapq(t,c^2)},
\]
and imposing the branching rule
\begin{align}\label{Eq_branching-rule}
&\mathcal{K}_c(x_1,\dots,x_{k+1};y_1,\dots,y_{k+1};t;p,q)=
\frac{\prod_{i=1}^{k+1}\Gammapq(cx_i^{\pm}y_{k+1}^{\pm})}
{\Gammapq^{k+1}(t)\Gammapq(c^2)
\prod_{1\leq i<j\leq k+1}\Gammapq(tx_i^{\pm}x_j^{\pm})} \\
&\quad\times\int\mathcal{K}_{ct^{-1/2}}(z;y_1,\dots,y_k;t;p,q)
\Delta_{\mathrm{D}}\big(z;t^{1/2}x_1^{\pm},\dots,t^{1/2}x_{k+1}^{\pm},
pqy_{k+1}^{\pm}/ct^{1/2};p,q\big) \,\frac{\dup z}{z}, \notag
\end{align}
where $z:=(z_1,\dots,z_k)$ and $I_C$ is the union of the sets
\begin{gather*}
t^{-1}p^{\mathbb{N}_0+1} q^{\mathbb{N}_0+1}C,\quad
t^{1/2}x_i^{\pm} p^{\mathbb{N}_0} q^{\mathbb{N}_0} \; (1\leq i\leq k+1), \\
ct^{-1/2} y_i^{\pm} p^{\mathbb{N}_0} q^{\mathbb{N}_0} \; (1\leq i\leq k),\quad
c^{-1}t^{-1/2} y_{k+1}^{\pm} p^{\mathbb{N}_0+1} q^{\mathbb{N}_0+1}.
\end{gather*}
The condition that $t^{-1}p^{\mathbb{N}_0+1} q^{\mathbb{N}_0+1}C$ lies in 
the interior of $C$ (which can be dropped if $k=1$) requires that
$\abs{pq/t}<1$.
However, by the symmetry \cite[Proposition 3.5]{Rains09}
\[
\mathcal{K}_c(x;y;pq/t;p,q)
=\Gammapq^{2k}(t) \mathcal{K}_c(x;y;t;p,q)
\prod_{1\leq i<j\leq k}
\Gammapq\big(tx_i^{\pm}x_j^{\pm},ty_i^{\pm}y_j^{\pm}\big),
\]
the interpolation kernel may be meromorphically extended to
$t\in\mathbb{C}^{\ast}$.
Additional symmetries of the interpolation kernel, beyond the
$\mathrm{BC}_k$-symmetry in both $x$ and $y$, are
\[
\mathcal{K}_c(x;y;t;p,q)=\mathcal{K}_c(y;x;t;p,q)=\mathcal{K}_c(x;y;t;q,p)=
\mathcal{K}_{-c}(-x;y;t;p,q).
\]
Replacing $(c,x)\mapsto (-c,-x)$ in \eqref{Eq_branching-rule}
and using $\mathcal{K}_c(x;y;t;p,q)=\mathcal{K}_{-c}(-x;y;t;p,q)$, 
it follows that the branching rule, and hence the interpolation kernel, 
is independent of the choice of branch of $t^{1/2}$.
It should also be remarked that the symmetry in $y$ is not at all evident
from the definition and is a consequence of the same symmetry for
the formal interpolation kernel of \cite[Section~2]{Rains18}.

By specialising one of $x,y$ to $a\spec{\bla}_{k;t;p,q}/c$ 
for $\bla\in\mathscr{P}_k^2$ the interpolation kernel reduces to 
an elliptic interpolation function:
\begin{equation}\label{Eq_kernel-spec}
\mathcal{K}_c(x;a\spec{\bla}_{k;t;p,q}/c;t;p,q)
=\Ri_{\bla}(x;a,b;t;p,q)\prod_{i=1}^k
\frac{(pq/ab)^{2\lar{1}_i\lar{2}_i}\Gammapq(ax_i^{\pm},bx_i^{\pm})}
{\Gammapq(t^i,t^{i-1}ab)},
\end{equation}
where $b$ on the right is fixed by $c^2=t^{k-1}ab$.\footnote{Note that this
expression is independent of the choice of branch for $c$.}
The kernel also factors if $c=(pq/t)^{1/2}$ \cite[Proposition~2.10]{Rains18}:
\begin{equation}\label{Eq_kernel-c-factor}
\mathcal{K}_{(pq/t)^{1/2}}(x;y;t;p,q)
=\prod_{i,j=1}^k\Gammapq\big((pq/t)^{1/2}x_i^{\pm}y_j^{\pm}\big)
=\DeltaSe\big(x;y;(pq/t)^{1/2};p,q\big),
\end{equation}
where we recall the definition of 
$\DeltaSe\big(x;y;c;p,q\big)$ given in \eqref{Eq_DeltaSe}
(which does not necessarily assume that the alphabets $x$ and $y$
have the same cardinality).

The key property of the kernel from which our $\A_n$ integrals follow is 
\cite[Theorem~2.16]{Rains18}.

\begin{theorem}\label{Thm_kernel-decomp}
Let $k,\ell$ be nonnegative integers such that $k\leq \ell$, and
$b,c,d,t\in\mathbb{C}^{\ast}$, $x:=(x_1,\dots,x_{\ell})\in(\mathbb{C}^{\ast})^{\ell}$, 
$y:=(y_1,\dots,y_k)\in(\mathbb{C}^{\ast})^k$ such that $\abs{t},\abs{pq/t}<1$. 
Then
\begin{align}\label{Eq_kernel-decomp}
&\int\mathcal{K}_c(x;z,b,bt,\dots,bt^{\ell-k-1};t;p,q)
\mathcal{K}_d(z;y;t;p,q) 
\DeltaSv(z;t^{\ell-k}b,pq/bc^2d^2;t;p,q) \, \frac{\dup z}{z} \notag \\
&\quad =\mathcal{K}_{cd}(x;y_1,\dots,y_k,
bd,bdt,\dots,bdt^{\ell-k-1};t;p,q) \notag \\[1mm]
&\qquad \times \prod_{i=1}^{\ell-k}
\frac{\Gammapq(t^{1-i}c^2d^2)}{\Gammapq(t^{1-i}c^2)}
\prod_{i=1}^{\ell}\frac{\Gammapq(bcx_i^{\pm})}{\Gammapq(bcd^2x_i^{\pm})}
\prod_{i=1}^k\frac{\Gammapq(t^{\ell-k}bdy_i^{\pm})}
{\Gammapq(bc^2dy_i^{\pm})}, \notag
\end{align}
where $z:=(z_1,\dots,z_k)$ and $I_C$ is the union of the sets
\begin{gather*}
tp^{\mathbb{N}_0} q^{\mathbb{N}_0}C,\quad
t^{-1}p^{\mathbb{N}_0+1} q^{\mathbb{N}_0+1}C,\quad
t^{\ell-k}bp^{\mathbb{N}_0} q^{\mathbb{N}_0},\quad
(bc^2d^2)^{-1}p^{\mathbb{N}_0+1} q^{\mathbb{N}_0+1}, \\
cx_i^{\pm} p^{\mathbb{N}_0} q^{\mathbb{N}_0} \; (1\leq i\leq \ell),\quad
dy_i^{\pm} p^{\mathbb{N}_0} q^{\mathbb{N}_0} \; (1\leq i\leq k).
\end{gather*}
\end{theorem}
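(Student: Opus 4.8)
The plan is to establish \eqref{Eq_kernel-decomp} by induction on $\ell-k$, with the branching rule \eqref{Eq_branching-rule} as the main engine and with base case $\ell=k$. When $\ell=k$ the claim reduces to the Cauchy-type evaluation
\begin{align*}
&\int\mathcal{K}_c(x;z;t;p,q)\,\mathcal{K}_d(z;y;t;p,q)\,
\DeltaSv\big(z;b,pq/bc^2d^2;t;p,q\big)\,\frac{\dup z}{z} \\
&\qquad=\mathcal{K}_{cd}(x;y;t;p,q)\prod_{i=1}^k
\frac{\Gammapq(bcx_i^{\pm})\,\Gammapq(bdy_i^{\pm})}
{\Gammapq(bcd^2x_i^{\pm})\,\Gammapq(bc^2dy_i^{\pm})}
\end{align*}
of the product of two kernels against a two-parameter Selberg density; this identity I would in turn prove by a secondary induction on $k$. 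For $k=1$ one has $\mathcal{K}_e(u;v;t;p,q)=\Gammapq(eu^{\pm}v^{\pm})/\Gammapq(t,e^2)$, so up to the explicit constant $1/\big(\Gammapq(t)^2\Gammapq(c^2)\Gammapq(d^2)\big)$ the integrand is the Selberg density $\DeltaSv(z;cx_1^{\pm},dy_1^{\pm},b,pq/bc^2d^2;t;p,q)$. The product of these six parameters is $c^2d^2\cdot b\cdot pq/bc^2d^2=pq$, so the balancing condition for \eqref{Eq_ESelberg} at $k=1$ (Spiridonov's elliptic beta integral) holds, and after applying \eqref{Eq_ESelberg} the resulting product of elliptic gamma functions collapses, via the reflection formula \eqref{Eq_reflection}, to the right-hand side of the displayed identity.

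For the inductive steps — both the secondary step in $k$ and the reduction of the general case to $\ell=k$ — the mechanism is the same: expand one of the kernels on the left by means of \eqref{Eq_branching-rule} (for the reduction in $\ell-k$ one also uses the consequence of \eqref{Eq_branching-rule} that peels a geometric-progression tail $b,bt,\dots,bt^{j-1}$ off the second alphabet of a kernel), interchange the order of the resulting iterated integral by Fubini, and evaluate the inner integral either as a smaller instance of the identity being proved or by a further application of \eqref{Eq_branching-rule}. What then remains is to simplify the accumulated products of elliptic gamma functions, using \eqref{Eq_reflection}, \eqref{Eq_functionalEq} and the relation between $\DeltaSv$ and $\DeltaD$ recorded earlier. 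The prescribed interior set $I_C$ in the statement is exactly what makes these contour manipulations legitimate, and the $\mathrm{BC}$-symmetry of the integrands together with the symmetry $\mathcal{K}_c(x;y;t;p,q)=\mathcal{K}_c(y;x;t;p,q)$ lets one branch in whichever alphabet is convenient. The hypotheses $\abs{t},\abs{pq/t}<1$ are needed only for the kernels to be defined; the $t\leftrightarrow pq/t$ symmetry quoted just before the theorem removes the restriction afterwards.

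The main obstacle is organisational rather than conceptual. One must keep careful track of which geometric sequences of poles lie in the interior of $C$ after each branching and each interchange of integrations, and one must verify that the three products of elliptic gamma functions generated by iterating the branching rule telescope precisely into the prefactor on the right of \eqref{Eq_kernel-decomp}; this bookkeeping, rather than any single hard step, is the bulk of the work. As a cross-check — and as an alternative route that avoids the iterated branching — one can instead specialise $x$ to a spectral vector $a\spec{\bla}_{\ell;t;p,q}/(cd)$ with $\bla\in\mathscr{P}_{\ell}^{2}$: by the symmetry $\mathcal{K}_c(x;y)=\mathcal{K}_c(y;x)$ followed by \eqref{Eq_kernel-spec}, both sides of \eqref{Eq_kernel-decomp} collapse to identities among the $\mathrm{BC}$-symmetric interpolation functions $\Ri_{\bla}$, the left-hand integral becoming an evaluation of $\Ri_{\bla}$ against the kernel $\mathcal{K}_d$ and the Selberg density. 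Since the spectral vectors are Zariski dense and both sides of \eqref{Eq_kernel-decomp}, viewed as $\mathrm{BC}_{\ell}$-symmetric meromorphic functions of $x$, have poles confined to the same prescribed family of geometric sequences, agreement on spectral vectors would force agreement in general.
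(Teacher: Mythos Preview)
The paper does not prove Theorem~\ref{Thm_kernel-decomp}; it is quoted there as \cite[Theorem~2.16]{Rains18}, introduced only by the sentence ``The key property of the kernel from which our $\A_n$ integrals follow is \cite[Theorem~2.16]{Rains18}.'' So there is no in-paper argument to compare your proposal against.

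On your proposal itself: the $k=\ell=1$ case is correctly identified as Spiridonov's beta integral, and attacking the general statement by induction through the branching rule \eqref{Eq_branching-rule} is a natural impulse. But the scheme does not close as routinely as you suggest. The branching rule simultaneously shifts $c\mapsto ct^{-1/2}$, drops one variable from \emph{each} alphabet, and introduces a fresh $(k-1)$-tuple of integration variables. If in the $\ell=k$ case you branch on $\mathcal{K}_c(x;z)$ and swap the order of integration, the inner $z$-integral still carries the full $k$-variable factor $\mathcal{K}_d(z;y)$ coupled to the new variables through a Dixon density; this is not an instance of the inductive hypothesis at level $k-1$, and recognising it as something evaluable (essentially as a second branching step for $\mathcal{K}_d$) requires a nontrivial matching of parameters that you have not indicated. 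The same issue recurs in your reduction of $\ell-k$. In \cite{Rains18} the convolution identity is in fact first established for the \emph{formal} interpolation kernel --- a sum over bipartitions --- where it follows from the elliptic Jackson summation \eqref{Eq_Cn-Jackson}, and only then transferred to the analytic kernel defined by \eqref{Eq_branching-rule}. Your alternative spectral-vector/density argument is close in spirit to that transfer step, but as stated it presupposes the corresponding integral identity for $\Ri_{\bla}$, which is of comparable depth to the theorem itself.
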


Specialising $c=(pq/t)^{1/2}$ we can use \eqref{Eq_kernel-c-factor} and
\eqref{Eq_reflection} to obtain the following corollary.

\begin{corollary}\label{Cor_kernel-decomp-2}
Let $k,\ell$ be nonnegative integers such that $k\leq\ell$, and
$b,d,t\in\mathbb{C}^{\ast}$, $x:=(x_1,\dots,x_{\ell})\in(\mathbb{C}^{\ast})^{\ell}$,
$y:=(y_1,\dots,y_k)\in(\mathbb{C}^{\ast})^k$ such that $\abs{t},\abs{pq/t}<1$.
Fix $c:=(pq/t)^{1/2}$.
Then
\begin{align*}
&\int \mathcal{K}_d(z;y;t;p,q) 
\DeltaSv(z;t^{\ell-k}b,t/bd^2;t;p,q) \DeltaSe\big(z;x;c;t;p,q\big) 
\,\frac{\dup z}{z} \\[1mm]
&\quad =\mathcal{K}_{cd}
\big(x;y_1,\dots,y_k,bd,bdt,\dots,bdt^{\ell-k-1};t;p,q\big) \\[1mm]
&\qquad \times 
\prod_{i=1}^{\ell-k} \frac{\Gammapq(t^i)}{\Gammapq(t^i/d^2)}\cdot
\frac{\prod_{i=1}^k\Gammapq\big(t^{\ell-k}bdy_i^{\pm},ty_i^{\pm}/bd\big)}
{\prod_{i=1}^{\ell}\Gammapq(bcd^2x_i^{\pm},ct^{k-\ell+1}x_i^{\pm}/b)},
\end{align*}
where $z:=(z_1,\dots,z_k)$ and 
$I_C$ is as in Theorem~\ref{Thm_kernel-decomp} with $c$ specialised accordingly.
\end{corollary}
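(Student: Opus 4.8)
The plan is to derive Corollary~\ref{Cor_kernel-decomp-2} directly from Theorem~\ref{Thm_kernel-decomp} by specialising $c=(pq/t)^{1/2}$ and simplifying. First I would substitute this value of $c$ into \eqref{Eq_kernel-decomp}. On the left-hand side the factor $\mathcal{K}_c(x;z,b,bt,\dots,bt^{\ell-k-1};t;p,q)$ has its first-kernel parameter equal to $(pq/t)^{1/2}$, so by the factorisation \eqref{Eq_kernel-c-factor} it collapses to $\DeltaSe\big(x;(z,b,bt,\dots,bt^{\ell-k-1});c;p,q\big)$, i.e., a product of $\Gammapq$ factors. The piece involving the $z$ variables becomes $\DeltaSe(z;x;c;p,q)$ (using symmetry of $\DeltaSe$ in its two alphabets and the $\pm$ notation), while the piece involving $b,bt,\dots,bt^{\ell-k-1}$ against $x$ produces a product $\prod_{i=1}^{\ell}\prod_{j=0}^{\ell-k-1}\Gammapq\big(c(bt^j)^{\pm}x_i^{\pm}\big)$; this is independent of $z$ and pulls outside the integral. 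I would also note that with $c^2=pq/t$ the density on the left, $\DeltaSv(z;t^{\ell-k}b,pq/bc^2d^2;t;p,q)$, becomes $\DeltaSv(z;t^{\ell-k}b,t/bd^2;t;p,q)$, matching the statement.

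Next I would simplify the prefactor on the right-hand side of \eqref{Eq_kernel-decomp} under $c^2=pq/t$. The factor $\prod_{i=1}^{\ell-k}\Gammapq(t^{1-i}c^2d^2)/\Gammapq(t^{1-i}c^2)=\prod_{i=1}^{\ell-k}\Gammapq(t^{-i}pqd^2)/\Gammapq(t^{-i}pq)$; applying the reflection formula \eqref{Eq_reflection} to both numerator and denominator turns this into $\prod_{i=1}^{\ell-k}\Gammapq(t^i)/\Gammapq(t^i/d^2)$, reindexing $i\mapsto \ell-k+1-i$ along the way. Similarly $\prod_{i=1}^{k}\Gammapq(t^{\ell-k}bdy_i^{\pm})/\Gammapq(bc^2dy_i^{\pm})=\prod_{i=1}^k\Gammapq(t^{\ell-k}bdy_i^{\pm})/\Gammapq(pqdy_i^{\pm}/bt)$, and reflection applied to the denominator gives $\Gammapq(ty_i^{\pm}/bd)$ up to the needed rearrangement, producing the numerator $\prod_{i=1}^k\Gammapq(t^{\ell-k}bdy_i^{\pm},ty_i^{\pm}/bd)$ of the corollary. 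The factor $\prod_{i=1}^{\ell}\Gammapq(bcx_i^{\pm})/\Gammapq(bcd^2x_i^{\pm})$ stays but must be combined with the extra $z$-independent $\Gammapq$ product coming from the left-hand side collapse described above.

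The bookkeeping step is then to reconcile the two groups of $x$-dependent $\Gammapq$ factors. Moving the left-hand side product $\prod_{i=1}^{\ell}\prod_{j=0}^{\ell-k-1}\Gammapq\big(c bt^j x_i^{\pm}\big)\Gammapq\big(c x_i^{\pm}/(bt^j)\big)$ to the right (it divides, hence appears with a minus sign in the exponent once combined with everything), and merging it with $\prod_{i=1}^{\ell}\Gammapq(bcx_i^{\pm})/\Gammapq(bcd^2x_i^{\pm})$, I expect the product over $j$ from $0$ to $\ell-k-1$ to telescope against shifts by $t$ — the reflection formula \eqref{Eq_reflection} and the relation $c^2=pq/t$ convert $\Gammapq(cx_i^{\pm}/(bt^j))$ into something like $1/\Gammapq(ct^{j+1}x_i^{\pm}/b)$ — leaving exactly the claimed denominator $\prod_{i=1}^{\ell}\Gammapq(bcd^2x_i^{\pm},ct^{k-\ell+1}x_i^{\pm}/b)$. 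This telescoping/reflection identity on the $x_i$ factors is the main obstacle: one has to carefully track which $\Gammapq$'s land in the numerator versus denominator and verify that the powers of $t$ at the two ends of the telescoping range are $bcd^2$ (from $j=-1$, i.e., the surviving boundary term tied to $b$) and $ct^{k-\ell+1}/b$ (from $j=\ell-k$), rather than accumulating spurious factors. Finally I would observe that the contour description $I_C$ is unchanged except that every occurrence of $c$ is now $(pq/t)^{1/2}$, and that the sets $cx_i^{\pm}p^{\mathbb{N}_0}q^{\mathbb{N}_0}$ together with $tp^{\mathbb{N}_0}q^{\mathbb{N}_0}C$ and $t^{-1}p^{\mathbb{N}_0+1}q^{\mathbb{N}_0+1}C$ now also ensure the new $\DeltaSe(z;x;c;p,q)$ factor's poles lie inside $C$, so no additional contour conditions are needed.
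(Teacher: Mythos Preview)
Your approach is exactly what the paper intends: specialise $c=(pq/t)^{1/2}$ in Theorem~\ref{Thm_kernel-decomp}, use \eqref{Eq_kernel-c-factor} to factor the kernel, and clean up with the reflection formula \eqref{Eq_reflection}. One small correction to your telescoping sketch: reflection turns $\Gammapq(cx_i^{\pm}/(bt^j))$ into $1/\Gammapq(cbt^{j+1}x_i^{\pm})$ (with $b$ in the numerator, not $1/\Gammapq(ct^{j+1}x_i^{\pm}/b)$), and it is this that telescopes against $\Gammapq(cbt^jx_i^{\pm})$ to leave $\Gammapq(cbx_i^{\pm})/\Gammapq(cbt^{\ell-k}x_i^{\pm})$; a final reflection on the surviving $\Gammapq(cbt^{\ell-k}x_i^{\pm})$ then produces the claimed $1/\Gammapq(ct^{k-\ell+1}x_i^{\pm}/b)$.
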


If we further fix $\ell=k$, specialise $y=a\spec{\bmu}_{k;t;p,q}/d$
and make the substitution
\[
(a,b,d^2)\mapsto (t_1,t_3,t^{k-1}t_1t_2),
\]
we obtain the elliptic beta integral
\begin{align}\label{Eq_vdBult}
&\Int \Ri_{\bmu}(z;t_1,t_2;t;p,q) \DeltaSv\big(z;t_1,t_2,t_3,t_4,
cx_1^{\pm},\dots,cx_k^{\pm};t,p,q\big)\,\frac{\dup z}{z} \\[1mm]
&\quad=\Ri_{\bmu}(x;ct_1,ct_2;t;p,q) 
\Delta^0_{\bmu}\big(t^{k-1}t_1/t_2\vert t^{k-1}t_1t_3,t^{k-1}t_1t_4\big) 
\notag \\
&\qquad\times
\prod_{i=1}^k \bigg(\prod_{1\leq r<s\leq 4}
\Gammapq(t^{i-1}t_rt_s) \prod_{r=1}^4 \Gammapq(ct_rx_i^{\pm})\bigg), \notag
\end{align}
where $z:=(z_1,\dots,z_k)$, 
$t,t_1,t_2,t_3,t_4,x_1,\dots,x_k\in\mathbb{C}^{\ast}$ such that
$\abs{t}<1$ and $t^{k-2}t_1t_2t_3t_4=1$.
As before, $c:=(pq/t)^{1/2}$, and $I_C$ is the union of the sets
\[
t_r p^{\mathbb{N}_0} q^{\mathbb{N}_0} \; (1\leq r\leq 4),\quad
tp^{\mathbb{N}_0} q^{\mathbb{N}_0}C,\;\quad
cx_r^{\pm}p^{\mathbb{N}_0} q^{\mathbb{N}_0} \; (1\leq r\leq k),
\]
and the sets \eqref{Eq_seq1} and \eqref{Eq_seq2} with $b\mapsto t_2$.
For $\bmu=\bzero$ this is \cite[Theorem 3.1, $m=0$]{vdBult09} due to
van der Bult, see also \cite{Rains12,SV11}.

A final result for the interpolation kernel that is needed is
\cite[Corollary 3.25]{Rains18}.

\begin{proposition}\label{Prop_RK}
Let $\bmu\in\Part_k^2$, $x=(x_1,\dots,x_k)\in(\mathbb{C}^{\ast})^k$ and
$c,t,t_1,t_2,t_3,t_4,t_5\in \mathbb{C}^{\ast}$ such that
$\abs{t},\abs{pq/t}<1$ and 
\[
c^2t^{k-1}t_2t_3t_4t_5=pq.
\]
Then
\begin{align}\label{Eq_RK}
&\int \mathcal{K}_c(x;z;t;p,q)\Ri_{\bmu}(z;t_1,t_2;t;p,q)
\DeltaSv(z;t_2,t_3,t_4,t_5;t,p,q)\,\frac{\dup z}{z} \\
&\qquad=\prod_{i=1}^k\bigg(\prod_{2\leq r<s\leq 5}\Gamma(t^{i-1}t_rt_s)
\prod_{r=2}^5\Gamma\big(ct_rx_i^{\pm}\big)\bigg) \notag \\
&\qquad\quad\times\sum_{\bnu}\obinomE{\bmu}{\bnu}_{[t^{k-1}t_1/t_2,c^2]
(t^{k-1}t_1t_3,t^{k-1}t_1t_4,t^{k-1}t_1t_5);t;p,q}\Ri_{\bnu}(x;t_1/c,ct_2;t,p,q),
\notag
\end{align}
where $z:=(z_1,\dots,z_k)$ and $I_C$ is the union of
\begin{gather*}
tp^{\mathbb{N}_0} q^{\mathbb{N}_0}C,\quad
t^{-1}p^{\mathbb{N}_0+1} q^{\mathbb{N}_0+1}C,\quad
t_r p^{\mathbb{N}_0} q^{\mathbb{N}_0} \; (2\leq r\leq 5),\quad
cx_i^{\pm} p^{\mathbb{N}_0} q^{\mathbb{N}_0} \; (1\leq i\leq k)
\end{gather*}
and the sets \eqref{Eq_seq1} and \eqref{Eq_seq2} with $b\mapsto t_2$.
\end{proposition}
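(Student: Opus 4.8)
The plan is to deduce \eqref{Eq_RK} from a single-term \emph{master} integral evaluation by means of two applications of the connection-coefficient formula \eqref{Eq_connection} followed by one application of the elliptic Jackson summation \eqref{Eq_Cn-Jackson}. I would obtain the master evaluation as the generic-$c$ analogue of the elliptic beta integral \eqref{Eq_vdBult}: taking $\ell=k$ in Theorem~\ref{Thm_kernel-decomp}, sending the free alphabet $y$ there to a spectral vector $t_5\spec{\bmu}_{k;t;p,q}/d$ via \eqref{Eq_kernel-spec}, and absorbing the resulting factors $\Gammapq(t_5z_i^{\pm})$ --- together with the factors produced by the second argument of the specialised kernel --- into the two-parameter Selberg density, which thereby acquires four parameters. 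After renaming the free parameters this produces an identity of the form
\[
\int\mathcal{K}_c(x;z;t;p,q)\,\Ri_{\bmu}(z;t_5,t_2;t;p,q)\,\DeltaSv(z;t_2,t_3,t_4,t_5;t;p,q)\,\frac{\dup z}{z}=\Ri_{\bmu}(x;ct_5,ct_2;t;p,q)\,\Phi(\bmu),
\]
valid precisely under the balancing $c^2t^{k-1}t_2t_3t_4t_5=pq$ of Proposition~\ref{Prop_RK}, where $\Phi(\bmu)$ is an explicit product of elliptic Gamma functions whose $\bmu$-dependence, by \eqref{Eq_Gamma-Delta}, is expressible (up to an elementary $\bmu$-dependent scalar) through a single $\Delta^0_{\bmu}$-symbol. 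The only feature distinguishing this from \eqref{Eq_RK} is that here the first argument $t_5$ of the interpolation function is pinned to a parameter of the Selberg density, whereas in \eqref{Eq_RK} the corresponding argument $t_1$ is free.

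To pass from the master evaluation to \eqref{Eq_RK}, I would first free the parameter $t_1$ by means of \eqref{Eq_connection} with $a=t_1$, $b=t_2$, $a'=t_5$:
\[
\Ri_{\bmu}(z;t_1,t_2;t;p,q)=\sum_{\bnu}\obinomE{\bmu}{\bnu}_{[t^{k-1}t_1/t_2,\,t_1/t_5](t^{k-1}t_1t_5);t;p,q}\,\Ri_{\bnu}(z;t_5,t_2;t;p,q).
\]
Substituting this into the left-hand side of \eqref{Eq_RK}, interchanging summation and integration (legitimate for the contour $C$ in the statement once the parameters are in generic position), and evaluating each integral by the master formula turns the left-hand side of \eqref{Eq_RK} into $\sum_{\bnu}\obinomE{\bmu}{\bnu}_{[t^{k-1}t_1/t_2,\,t_1/t_5](t^{k-1}t_1t_5);t;p,q}\,\Phi(\bnu)\,\Ri_{\bnu}(x;ct_5,ct_2;t;p,q)$. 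Next I would apply \eqref{Eq_connection} a second time, now in the $x$-variables, with $a=ct_5$, $b=ct_2$, $a'=t_1/c$, expanding $\Ri_{\bnu}(x;ct_5,ct_2;t;p,q)$ over $\bnu'$ in terms of $\Ri_{\bnu'}(x;t_1/c,ct_2;t;p,q)$; the new binomial coefficient has subscript $[t^{k-1}t_5/t_2,\,c^2t_5/t_1]$ and virtual variable $t^{k-1}t_1t_5$. One is then left with a double sum over $\bnu,\bnu'$. Stripping the virtual variables off the two nested binomials via \eqref{Eq_Ebinom-v}--\eqref{Eq_Ebinom-reduction}, rewriting the $\bnu$-dependence of $\Phi(\bnu)$ through \eqref{Eq_Gamma-Delta} as a symbol $\Delta^0_{\bnu}(t^{k-1}t_5/t_2\vert d,e;t;p,q)$ for suitable $d,e$, and observing that the intermediate $\Delta^0_{\bnu}$-factors cancel, the $\bnu$-sum becomes precisely the left-hand side of an instance of \eqref{Eq_Cn-Jackson} --- with $a=t^{k-1}t_1/t_2$, $b=t_1/t_5$, and the parameter called $c$ there equal to the square of the present $c$ --- whose balancing $bcde=apq$ reduces to $c^2t^{k-1}t_2t_3t_4t_5=pq$. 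The Jackson sum then collapses the $\bnu$-sum to a single binomial coefficient, and a short computation shows its subscript and virtual variables to be $[t^{k-1}t_1/t_2,\,c^2](t^{k-1}t_1t_3,t^{k-1}t_1t_4,t^{k-1}t_1t_5)$, reproducing the right-hand side of \eqref{Eq_RK}.

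I expect the main obstacle to be bookkeeping rather than anything conceptual. One has to compute $\Phi(\bmu)$ explicitly from Theorem~\ref{Thm_kernel-decomp} and \eqref{Eq_kernel-spec}, check via \eqref{Eq_Gamma-Delta} that, after absorbing elementary scalar factors, its $\bmu$-dependent part is a $\Delta^0_{\bmu}$-symbol of exactly the shape demanded by \eqref{Eq_Cn-Jackson} --- this is where the balancing $c^2t^{k-1}t_2t_3t_4t_5=pq$ enters essentially --- and then verify that, once the two connection-coefficient expansions and the Jackson sum have been carried out, all the $\bmu$- and $\bnu$-independent elliptic Gamma functions assemble into exactly $\prod_{i=1}^k\big(\prod_{2\leq r<s\leq 5}\Gammapq(t^{i-1}t_rt_s)\prod_{r=2}^5\Gammapq(ct_rx_i^{\pm})\big)$; this last step is long but mechanical, using only \eqref{Eq_reflection}, \eqref{Eq_functionalEq}, the balancing condition, and \eqref{Eq_Gamma-Delta}. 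An alternative route, which avoids most of the analytic fine print about contour deformations and interchanges of $\sum$ and $\int$, would be to prove \eqref{Eq_RK} first for $x$ a spectral vector $a\spec{\bla}_{k;t;p,q}/c$: by \eqref{Eq_kernel-spec} the left-hand side then reduces to a bilinear integral of two interpolation functions against a six-parameter Selberg density --- a known evaluation, itself a consequence of \eqref{Eq_Cn-Jackson} --- while on the right $\Ri_{\bnu'}(a\spec{\bla}_{k;t;p,q}/c;t_1/c,ct_2;t;p,q)$ vanishes unless $\bnu'\subseteq\bla$, so that the sum is finite; one then extends to generic $x$ by the standard argument that a $\mathrm{BC}_k$-symmetric theta function of prescribed degree is determined by its values on the interpolation lattice.
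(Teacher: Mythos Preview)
The paper does not prove Proposition~\ref{Prop_RK}; it is quoted without proof as Corollary~3.25 of \cite{Rains18}. There is thus no in-paper argument to compare against.

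That said, your strategy is sound and would constitute a self-contained proof. The ``master evaluation'' you obtain from Theorem~\ref{Thm_kernel-decomp} with $\ell=k$ and $y$ specialised via \eqref{Eq_kernel-spec} is exactly the $t_5=t_1$ case of \eqref{Eq_RK}; indeed, the paper itself extracts this specialisation \emph{from} \eqref{Eq_RK} as the opening step in the proof of Theorem~\ref{Thm_key}, observing that the $\bnu$-sum then collapses by \eqref{Eq_connection}. Your proposal runs this in reverse: derive the $t_5=t_1$ case independently from the kernel decomposition, then bootstrap to arbitrary $t_1$ by two applications of \eqref{Eq_connection} and one of \eqref{Eq_Cn-Jackson}. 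The bookkeeping does work out. The $\bnu$-dependent part of $\Phi(\bnu)$, computed via \eqref{Eq_Gamma-Delta} from the $y$-factors in Theorem~\ref{Thm_kernel-decomp}, is $\Delta^0_{\bnu}(t^{k-1}t_5/t_2\mid t^{k-1}t_3t_5,t^{k-1}t_4t_5)$ (after one use of \eqref{Eq_Delta-symm}), and one checks that the Jackson balancing $bc'de=apq$ becomes precisely $c^2t^{k-1}t_2t_3t_4t_5=pq$. After the sum, the leftover factors $\Delta^0_{\bmu}(t^{k-1}t_1/t_2\mid t^{k-1}t_1t_5)$ and $\Delta^0_{\bnu'}(t^{k-1}t_1/c^2t_2\mid t^{k-1}t_1t_5)^{-1}$ from stripping the virtual variables off the two connection binomials supply the third virtual argument $t^{k-1}t_1t_5$ in the final coefficient, matching \eqref{Eq_RK}.

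Your alternative route via spectral specialisation of $x$ is also viable and is closer in spirit to how such identities are established in \cite{Rains18}: the integral then becomes a bilinear Selberg average of two interpolation functions, which is a known discrete evaluation.
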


We will use this to prove the following key result.

\begin{theorem}\label{Thm_key}
Let $\bmu\in\Part^2$, $x=(x_1,\dots,x_k)\in(\mathbb{C}^{\ast})^k$ and
$c,t,t_1,t_2,t_3,t_4,v_1,v_2\in\mathbb{C}^{\ast}$ such that
\[
t_4=tv_1\quad\text{and}\quad c^2t^{k-1}t_1t_2t_3t_4=pq.
\]
Then
\begin{align*}
&\int \mathcal{K}_c(x;z;t;p,q)\Ri_{\bmu}(z;v_1,v_2;tt_1v_1v_2,t_2;t;p,q)
\DeltaSv\big(z;t_1,t_2,t_3,t_4;t;p,q\big)\,\frac{\dup z}{z} \\
&\quad=\prod_{i=1}^k\bigg(
\prod_{1\leq r<s\leq 4}\Gamma(t^{i-1}t_rt_s)
\prod_{r=1}^4\Gamma\big(ct_rx_i^{\pm}\big) \bigg) \\[1mm]
&\qquad \times
\frac{\Delta^0_{\bmu}(t^kt_1v_1v_2/t_2\vert t^kt_1v_1;t;p,q)}
{\Delta^0_{\bmu}(t^kt_1v_1v_2/t_2\vert c^2t^kt_1v_1;t;p,q)}\,
\Ri_{\bmu}(x;cv_1,v_2/c;ctt_1v_1v_2,ct_2;t;p,q),
\end{align*}
where $z:=(z_1,\dots,z_k)$ and 
$I_C$ is as in Proposition~\ref{Prop_RK} with $t_5\mapsto t_1$.
\end{theorem}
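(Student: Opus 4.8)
The plan is to derive Theorem~\ref{Thm_key} from Proposition~\ref{Prop_RK} by rewriting the hybrid interpolation function $\Ri_{\bmu}(z;v_1,v_2;tt_1v_1v_2,t_2;t;p,q)$ appearing in the integrand as an ordinary interpolation function, using the constraint $t_4=tv_1$ together with the branching relation \eqref{Eq_R-branching-2}. Recall that \eqref{Eq_R-branching-2} expresses $\Ri_{\bla}(x_1,\dots,x_k;v_1,v_2;at,b;t;p,q)$ as a sum over $\bmu$ of elliptic binomials times $\Ri_{\bmu}(x_1,\dots,x_k;a/v_1v_2,b;t;p,q)$, i.e.\ it collapses the two extra ``$v$'' arguments at the cost of shifting the first parameter. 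Setting $a=t_1v_1v_2$ in that identity and matching $b=t_2$ identifies the hybrid function on the left of Theorem~\ref{Thm_key} with a binomial-weighted sum of genuine $\Ri_{\bnu}(z;t_1,t_2;t;p,q)$'s, precisely the object that Proposition~\ref{Prop_RK} knows how to integrate against $\mathcal{K}_c(x;z;t;p,q)\DeltaSv(z;t_2,t_3,t_4,t_5;t;p,q)$ once we set $t_5\mapsto t_1$ there. (One checks the balancing conditions are consistent: $c^2t^{k-1}t_1t_2t_3t_4=pq$ is exactly what Proposition~\ref{Prop_RK} requires after the substitution $t_5\mapsto t_1$, since the role of its ``$t_1$'' is played by one of our $t$'s in a way that the branching shift keeps track of.)

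The key steps, in order, are: \emph{(i)} apply \eqref{Eq_R-branching-2} to write $\Ri_{\bmu}(z;v_1,v_2;tt_1v_1v_2,t_2;t;p,q)=\sum_{\bkappa}\obinomE{\bmu}{\bkappa}_{[\cdot,\cdot](\cdot);t;p,q}\Ri_{\bkappa}(z;t_1,t_2;t;p,q)$ with the binomial arguments read off from \eqref{Eq_R-branching-2}; \emph{(ii)} substitute this into the left-hand side of Theorem~\ref{Thm_key} and interchange the (finite) sum with the integral; \emph{(iii)} apply Proposition~\ref{Prop_RK} (with $t_5\mapsto t_1$, $t_1\mapsto$ our ``$t_1$'' shifted appropriately, and the same $t_2,t_3,t_4$) to each term, producing the product of gamma factors $\prod_i\prod_{r<s}\Gammapq(t^{i-1}t_rt_s)\prod_r\Gammapq(ct_rx_i^\pm)$ times an inner sum over $\bnu$ of $\obinomE{\bkappa}{\bnu}\Ri_{\bnu}(x;\cdot,\cdot;t;p,q)$; \emph{(iv)} collapse the resulting double sum $\sum_{\bkappa}\sum_{\bnu}$ over the intermediate $\bkappa$ using the elliptic Jackson summation \eqref{Eq_Cn-Jackson}; and \emph{(v)} recognize the surviving single sum over $\bnu$, together with the leftover $\Delta^0$-ratio, as the hybrid interpolation function $\Ri_{\bmu}(x;cv_1,v_2/c;ctt_1v_1v_2,ct_2;t;p,q)$ via another application of \eqref{Eq_R-branching-2} (or \eqref{Eq_Rinew}), read in the ``$x$'' variables this time. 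The $\Delta^0$-ratio in the statement should emerge precisely from the normalization in \eqref{Eq_Rinew} combined with the $\Delta^0$-factors produced by steps (iii)--(iv).

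The main obstacle will be step \emph{(iv)}: getting all four parameter slots in the two elliptic binomials $\obinomE{\bmu}{\bkappa}_{[a,b](\mathbf{v})}$ and $\obinomE{\bkappa}{\bnu}_{[a',b'](\mathbf{v}')}$ to line up with the hypotheses of \eqref{Eq_Cn-Jackson} (namely $bcde=apq$ with the correct identification $a'=a/b$, $b'=c/b$, and the $\Delta^0_{\bmu}(a/b\vert d,e)$ weight matching what step (iii) supplies). This is a purely bookkeeping matter but a delicate one, because the $c$'s and $t$'s get shuffled by the two branchings and by the $c^2=t^{k-1}(\textrm{product})$ balancing, and one has to be careful that the argument lists $\mathbf v,\mathbf v'$ in \eqref{Eq_Ebinom-v} are normalized consistently (using \eqref{Eq_Ebinom-reduction} to add or remove reflection pairs $w,pqa/bw$ as needed). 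A secondary, lesser obstacle is verifying that the contour $I_C$ stated in Proposition~\ref{Prop_RK} with $t_5\mapsto t_1$ is exactly the $I_C$ claimed in Theorem~\ref{Thm_key}; since the integrand is the same after the rewriting in step (i) (the branching \eqref{Eq_R-branching-2} does not introduce new poles in $z$), this should be immediate, but it is worth a sentence.
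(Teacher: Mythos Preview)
Your plan is correct and essentially matches the paper's proof: expand the hybrid function via \eqref{Eq_R-branching-2}, apply Proposition~\ref{Prop_RK} with $t_5\mapsto t_1$, then reassemble on the $x$-side via \eqref{Eq_R-branching-2} again. The only difference is in your step~(iv): rather than invoking Jackson \eqref{Eq_Cn-Jackson} on the double sum, the paper first collapses the inner $\bnu$-sum coming from Proposition~\ref{Prop_RK} directly via the connection-coefficient formula \eqref{Eq_connection} (it is precisely the expansion of $\Ri_{\bkappa}(x;ct_1,ct_2)$), leaving a single $\bkappa$-sum handled by \eqref{Eq_Ebinom-reduction} (using $t_4=tv_1$) and a second branching---this neatly sidesteps the parameter-matching obstacle you flagged.
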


\begin{proof}[Proof of Theorem~\ref{Thm_key}]
In the following we write $\Gamma(x)$ instead of
$\Gammapq(x)$, $\mathcal{K}_c(x;z)$ instead of $\mathcal{K}_c(x;z;t;p,q)$,
and so on.

Setting $t_5=t_1$ in \eqref{Eq_RK}, which implies the balancing condition
$c^2t^{k-1}t_1t_2t_3t_4=pq$, gives
\begin{align*}
&\int \mathcal{K}_c(x;z)\Ri_{\bmu}(z;t_1,t_2)
\DeltaSv(z;t_1,t_2,t_3,t_4)\,\frac{\dup z}{z} \\
&\quad=\prod_{i=1}^k\bigg(
\prod_{1\leq r<s\leq 4}\Gamma(t^{i-1}t_rt_s)
\prod_{r=1}^4\Gamma\big(ct_rx_i^{\pm}\big)\bigg) \\
&\qquad\times 
\Delta^0_{\bmu}(t^{k-1}t_1/t_2\vert t^{k-1}t_1t_3,t^{k-1}t_1t_4)
\sum_{\bnu}\obinomE{\bmu}{\bnu}_{[t^{k-1}t_1/t_2,c^2](t^{k-1}t_1^2)}
\Ri_{\bnu}(x;t_1/c,ct_2) \\
&\quad=\prod_{i=1}^k\bigg(
\prod_{1\leq r<s\leq 4}\Gamma(t^{i-1}t_rt_s)
\prod_{r=1}^4\Gamma\big(ct_rx_i^{\pm}\big)\bigg) \\[2mm]
&\qquad \times
\Delta^0_{\bmu}(t^{k-1}t_1/t_2\vert t^{k-1}t_1t_3,t^{k-1}t_1t_4)
\Ri_{\bmu}(x;ct_1,ct_2),
\end{align*}
where the second equality follows from the connection coefficient
formula \eqref{Eq_connection}.
Multiplying both sides by
\[
\obinomE{\bla}{\bmu}_{[t^kt_1v_1v_2/t_2,tv_1v_2]
(t^kt_1v_1,t^kt_1v_2,pqt_1/tt_2)},
\]
where $\bla\in\Part^2$, and then summing over $\bmu$, yields
\begin{align*}
&\int \mathcal{K}_c(x;z)
\Ri_{\bla}(z;v_1,v_2;tt_1v_1v_2,t_2)
\DeltaSv(z;t_1,t_2,t_3,t_4)\,\frac{\dup z}{z} \\
&\quad=
\prod_{i=1}^k\bigg(\prod_{1\leq r<s\leq 4}\Gamma(t^{i-1}t_rt_s)
\prod_{r=1}^4\Gamma\big(ct_rx_i^{\pm}\big)\bigg) 
\Delta^0_{\bla}(t^kt_1v_1v_2/t_2\vert t^kt_1t_3v_1v_2,t^kt_1t_4v_1v_2) \\[1mm]
&\qquad \times \sum_{\bmu}
\obinomE{\bla}{\bmu}_{[t^kt_1v_1v_2/t_2,tv_1v_2]
(t^kt_1v_1,t^kt_1v_2,pqt_1/tt_2,pq/t_2t_3,pq/t_2t_4)}
\Ri_{\bmu}(x;ct_1,ct_2).
\end{align*}
Here the sum over $\bmu$ on the left has been carried out by
\eqref{Eq_R-branching-2} with $(a,b)\mapsto(t_1v_1v_2,t_2)$.
We now also assume that $t_4=tv_1$.
Then the sum on the right may be simplified 
by \eqref{Eq_Ebinom-reduction} with $w\mapsto t^kt_1v_1$ to
\begin{align*}
&\frac{\Delta^0_{\bla}(t^kt_1v_1v_2/t_2\vert t^kt_1v_1)}
{\Delta^0_{\bla}(t^kt_1v_1v_2/t_2\vert t^{k+1}t_1v_1^2v_2)} 
\sum_{\bmu} \obinomE{\bla}{\bmu}_{[t^kt_1v_1v_2/t_2,tv_1v_2]
(t^kc^2t_1v_1,t^kt_1v_2,pqt_1/tt_2)}\Ri_{\bmu}(x;ct_1,ct_2) \\[2mm]
&\quad=\frac{\Delta^0_{\bla}(t^kt_1v_1v_2/t_2\vert t^kt_1v_1)}
{\Delta^0_{\bla}(t^kt_1v_1v_2/t_2\vert t^{k+1}t_1v_1^2v_2)} \,
\Ri_{\bla}(x;tcv_1,v_2/c;ctt_1v_1v_2,ct_2),
\end{align*}
where the second equality follows from another application of
\eqref{Eq_R-branching-2}, now with
\[
(a,b,v_1,v_2)\mapsto(ct_1v_1v_2,ct_2,cv_1,v_2/c).
\]
As a result,
\begin{align*}
&\int \mathcal{K}_c(x;z) \Ri_{\bla}(z;v_1,v_2;tt_1v_1v_2,t_2)
\DeltaSv(z;t_1,t_2,t_3,t_4)\,\frac{\dup z}{z} \\
&\quad=\prod_{i=1}^k\bigg(\Gamma\big(ctv_1x_i^{\pm}\big)
\prod_{1\leq r<s\leq 3}\Gamma(t^{i-1}t_rt_s)
\prod_{r=1}^3 \Gamma\big(t^it_rv_1,ct_rx_i^{\pm}\big)\bigg) \\[1mm]
&\qquad\times
\Delta^0_{\bla}(t^kt_1v_1v_2/t_2\vert t^kt_1v_1,t^kt_1t_3v_1v_2)
\Ri_{\bla}(x;cv_1,v_2/c;ctt_1v_1v_2,ct_2).
\end{align*}
Replacing $\bla$ by $\bmu$ and applying the reflection equation 
\eqref{Eq_Delta-symm} completes the proof.
\end{proof}

\section{Proof and generalisations of Theorem~\ref{Thm_E-AnSelberg}}
\label{Sec_AFLT-proof}
The goal of this section is to prove the $\A_n$ elliptic Selberg integral of
Theorem~\ref{Thm_E-AnSelberg}.
As mentioned in the introduction, we will in fact prove an AFLT-type
generalisation of the theorem in which the integrand is multiplied by an
appropriate product of $\mathrm{BC}_n$-symmetric functions.

Throughout this section we suppress dependence on $p,q,t$.

\subsection{An $\A_n$ elliptic AFLT integral}
Before stating our main theorem we discuss the original AFLT integral of Alba,
Fateev, Litvinov and Tarnopolsky \cite{AFLT11} and some of its special cases
due to Kadell \cite{Kadell97} and Hua and Kadell~\cite{Hua63,Kadell93}.
For convenience these results will be expressed in terms of Selberg-type 
averages, and for $f\in\mathbb{C}[x_1,\dots,x_k]^{\Symm_k}=:\Lambda_k$, we
define 
\[
\big\langle f\big\rangle_{\alpha,\beta;\gamma}^k
:=\frac{1}{S_k(\alpha,\beta;\gamma)}
\Int_{[0,1]^k}\! f(x_1,\dots,x_k)
\prod_{i=1}^k x_i^{\alpha-1}(1-x_i)^{\beta-1}
\prod_{1\leq i<j\leq k}\abs{x_i-x_j}^{2\gamma}\,\dup x_1\cdots\dup x_k,
\]
where $S_k(\alpha,\beta;\gamma)$ is the Selberg integral \eqref{Eq_Selberg}.

For $\gamma\in\mathbb{C}^{\ast}$, let $P_{\la}^{(1/\gamma)}(x_1,\dots,x_k)$ 
be the Jack polynomial indexed by the partition $\la$,
see~\cite{Macdonald95,Stanley89}.
Also define the normalised Jack polynomial
\[
\tilde{P}_{\la}^{(1/\gamma)}(x_1,\dots,x_k):=
\frac{P_{\la}^{(1/\gamma)}(x_1,\dots,x_k)}{P_{\la}^{(1/\gamma)}(1,\dots,1)}.
\]
Then Kadell's generalised Selberg integral is \cite{Kadell97}
\begin{equation}\label{Eq_Kadell}
\big\langle\tilde{P}_{\la}^{(1/\gamma)}\big\rangle_{\alpha,\beta;\gamma}^k
=\prod_{i\geq 1}\frac{(\alpha+(k-i)\gamma)_{\la_i}}
{(\alpha+\beta+(2k-i-1)\gamma)_{\la_i}},
\end{equation}
where $(a)_n:=a(a+1)\cdots(a+n-1)$ is the ordinary shifted factorial.
In the case $\beta=\gamma$, Kadell further generalised this to a product of
two Jack polynomials as \cite{Kadell93}
\begin{equation}\label{Eq_Hua-Kadell}
\big\langle\tilde{P}_{\la}^{(1/\gamma)}\, \tilde{P}_{\mu}^{(1/\gamma)}
\big\rangle_{\alpha,\gamma;\gamma}^k
=\prod_{i,j=1}^k\frac{(\alpha+(2k-i-j)\gamma)_{\la_i+\mu_j}}
{(\alpha+(2k-i-j+1)\gamma)_{\la_i+\mu_j}}.
\end{equation}
Since in the Schur case, $\gamma=1$, this integral was previously discovered 
by Hua \cite{Hua63}, this last result is commonly referred to as the 
Hua--Kadell integral.

To describe the AFLT integral, which unifies \eqref{Eq_Kadell} and
\eqref{Eq_Hua-Kadell}, we need some basic plethystic notation,
see e.g.,~\cite{ARW21,Haglund08,Lascoux03}.
Let $\Lambda$ be the ring of symmetric functions in infinitely (but countably)
many variables over $\mathbb{C}$.
Then the power sum symmetric functions are defined as $p_0:=1$ and 
\[
p_r=x_1^r+x_2^r+\cdots,
\]
for $r\geq 1$.
Since $\Lambda=\mathbb{C}[p_1,p_2,\dots]$, any $f\in\Lambda$ admits an
expansion of the form $f=\sum_{\la} c_{\la} p_{\la}$, where
$p_{\la}=p_{\la_1}p_{\la_2}\cdots$.
Then for any $\xi\in\mathbb{C}$ and any alphabet $x$ (infinite or finite),
the expression $f[x+\xi]$ is defined as
\begin{equation}\label{Eq_pleth}
f[x+\xi]:=\sum_{\la} c_{\la}\prod_{i=1}^{l(\la)}\big(p_{\la_i}(x)+\xi\big).
\end{equation}
Clearly, if $x=(x_1,\dots,x_k)$ then $f[x+\xi]\in\Lambda_k$.
Moreover, $f[x]=f(x)$ and \eqref{Eq_pleth} unambiguously defines 
\[
f[k]=f(\underbrace{1,\dots,1}_{k \text{ times}}).
\]
Indeed, setting $x=\text{--}$ (the empty alphabet) and $\xi=k$ for
$k\in\mathbb{N}_0$ gives the same result as setting $x=(1,\dots,1)$ ($k$ ones) 
and $\xi=0$.

For $x=(x_1,\dots,x_k)$, let $\tilde{P}_{\la}[x+\xi]=P_{\la}[x+\xi]/P_{\la}[k+\xi]$.
Then the AFLT integral \cite[Appendix A]{AFLT11} may be stated as
\begin{align}\label{Eq_AFLT}
&\big\langle \tilde{P}_{\la}^{(1/\gamma)}[x]
\tilde{P}_{\mu}^{(1/\gamma)}[x+\beta/\gamma-1]
\big\rangle_{\alpha,\beta;\gamma}^k \\
&\quad=
\prod_{i=1}^k\frac{(\alpha+(k-i)\gamma)_{\la_i}}
{(\alpha+\beta+(2k-m-i-1)\gamma)_{\la_i}} 
\prod_{i=1}^k\prod_{j=1}^m
\frac{(\alpha+\beta+(2k-i-j-1)\gamma)_{\la_i+\mu_j}}
{(\alpha+\beta+(2k-i-j)\gamma)_{\la_i+\mu_j}}, \notag
\end{align}
where $\la\in\mathscr{P}_k$, $\mu\in\mathscr{P}$ and $m$ is any integer such
that $m\geq l(\mu)$.
The Kadell and Hua--Kadell integrals correspond to $\mu=0$ and $\beta=\gamma$
respectively.
As shown by Alba et al.~\cite{AFLT11}, the AFLT integral is important in
conformal field theory, particularly in the verification of the AGT conjecture
for $\mathrm{SU}(2)$, see \cite{AGT10}.
For further work on Selberg-type integrals and the AGT conjecture the
reader is referred to 
\cite{CPT20,FL12,IOY13,MMS11,MMS12,MMSS12,MS14,YHHZ23,ZM11}.

In our previous paper \cite{ARW21} we gave generalisations of the
AFLT integral to the elliptic level and to (non-elliptic) $\A_n$.
Our next theorem unifies these results by providing an elliptic $\A_n$
AFLT integral.
In the following we assume all the conditions of Theorem~\ref{Thm_E-AnSelberg}
to hold, including the fixing of a branch of $(pq/t)^{1/2}$.
For brevity we also suppress the dependence on $p,q$ and $t$ in most
of our functions, such as $\DeltaS(\dots;t;p,q)$, 
$\Ri_{\bla}(\dots;t;p,q)$ and $\Gammapq(z)$.

For $f:(\mathbb{C}^{\ast})^{k_1}\times\cdots\times(\mathbb{C}^{\ast})^{k_n}
\longrightarrow \mathbb{C}$ a function which is
$\mathrm{BC}_{k_r}$-symmetric in the $r$th set of variables, we define
the elliptic $\A_n$ Selberg average as
\begin{align*}
\big\langle f\big\rangle_{t_1,\dots,t_{2n+4}}^{k_1,\dots,k_n}
&:=\frac{1}{S_{k_1,\dots,k_n}^{\A_n}(t_1,\dots,t_{2n+4})} \\
&\quad\times \Int_C f\big(\zar{1},\dots,\zar{n}\big)
\DeltaS\big(\zar{1},\dots,\zar{n};t_1,\dots,t_{2n+4};(pq/t)^{1/2}\big)\,
\frac{\dup\zar{1}}{\zar{1}}\cdots\frac{\dup\zar{n}}{\zar{n}},
\end{align*}
where $S_{k_1,\dots,k_n}^{\A_n}(t_1,\dots,t_{2n+4})$ denotes
the elliptic $\A_n$ Selberg integral \eqref{Eq_E-AnSelberg}.
In addition to the conditions \eqref{Eq_in1} and \eqref{Eq_in2},
the contour $C=C_1^{k_1}\times\cdots\times C_n^{k_n}$ 
(where as before $C_r$ is a positively oriented smooth Jordan curve
around $0$ such that $C_r=C_r^{-1}$) should be such that 
any sequence of poles of $f$ in $\zar{r}_i$ tending to zero lies in the
interior of $C_r$, excluding those which are cancelled by the univariate part
of $\DeltaS(\zar{r};t_1,\dots,t_{2n+4})$.

\begin{theorem}[Elliptic $\A_n$ AFLT integral]\label{Thm_E-AnAFLT}
Assume the conditions of Theorem~\ref{Thm_E-AnSelberg} and let
$\tau_n:=t_{2n+1}t_{2n+2}t_{2n+3}/t^2$.
Then
\begin{align}\label{Eq_E-AnAFLT}
&\Big\langle\Ri_{\bla}\big(\zar{1};c^{1-n}t_1,c^{1-n}t_2\big)
\Ri_{\bmu}\big(\zar{n};t_{2n+2}/t,t_{2n+3}/t;t\tau_n,t_{2n+4}\big)
\Big\rangle_{t_1,\dots,t_{2n+4}}^{k_1,\dots,k_n}
\\ &\quad=
\prod_{r=3}^{2n}\Delta^0_{\bla}(t^{k_1-1}t_1/t_2\big\vert t^{k_1}t_1/t_r)
\prod_{r=2n+1}^{2n+4}
\Delta^0_{\bla}(t^{k_1-1}t_1/t_2\big\vert t^{k_1-1}t_1t_r) \notag \\[1mm]
&\qquad\times
\prod_{r=2n+2}^{2n+3}
\Delta^0_{\bmu}(t^{k_n}\tau_n/t_{2n+4}\vert t^{k_n-1}t_{2n+1}t_r)
\prod_{r=2}^n
\frac{\Delta^0_{\bmu}(t^{k_n}\tau_n/t_{2n+4}\vert t^{k_n}t_{2r-1}\tau_n)} 
{\Delta^0_{\bmu}(t^{k_n}\tau_n/t_{2n+4}\vert
t^{k_n+k_r-k_{r-1}}t_{2r-1}\tau_n)} \notag \\[1mm] 
&\qquad\times
\frac{\Delta^0_{\bmu}(t^{k_n}\tau_n/t_{2n+4}\vert 
t^{k_n}t_1\tau_n\spec{\bla}_{k_1;t;p,q})} 
{\Delta^0_{\bmu}(t^{k_n}\tau_n/t_{2n+4}\vert 
t^{k_n+1}t_1\tau_n\spec{\bla}_{k_1;t;p,q})},\notag
\end{align}
where $\bla\in\Part_{k_1}^2$ and $\bmu\in\Part^2$.
\end{theorem}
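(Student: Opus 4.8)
The plan is to prove \eqref{Eq_E-AnAFLT} in unnormalised form --- that is, to evaluate the integral of $\DeltaS(\zar{1},\dots,\zar{n};t_1,\dots,t_{2n+4};c)$ against the product $\Ri_{\bla}(\zar{1};c^{1-n}t_1,c^{1-n}t_2)\,\Ri_{\bmu}(\zar{n};t_{2n+2}/t,t_{2n+3}/t;t\tau_n,t_{2n+4})$ --- by induction on the rank $n$, Theorem~\ref{Thm_E-AnSelberg} being the case $\bla=\bmu=\bzero$ (for which every $\Delta^0$ on the right-hand side is an empty product equal to $1$). It seems necessary to carry along a mildly more general statement, allowing $\Ri_{\bla}$ at the first node to be a hybrid interpolation function $\Ri_{\bla}(\zar{1};v_1,\dots,v_{2\ell};a,b)$ and/or the first node of the chain to be replaced by a geometric string of extra parameters $b,bt,\dots,bt^{\ell-k_1-1}$ of the kind appearing in Theorem~\ref{Thm_kernel-decomp}; the case $\ell=k_1$ without virtual variables is the one stated.

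The base case $n=1$ comes directly from Theorem~\ref{Thm_key}: specialising $x$ there to a spectral vector $a\spec{\bla}_{k_1;t;p,q}/c$ and using \eqref{Eq_kernel-spec} to replace $\mathcal{K}_c(a\spec{\bla}_{k_1}/c;z)$ by $\Ri_{\bla}(z;a,b)\prod_i\Gammapq(az_i^{\pm},bz_i^{\pm})$ (up to explicit scalars), the two extra Gamma factors complete the four-parameter vertex density of Theorem~\ref{Thm_key} to the six-parameter $\A_1$ elliptic Selberg density. In the resulting identity the factor $\prod_i\prod_r\Gammapq(ct_rx_i^{\pm})$ evaluated at the spectral vector becomes, by \eqref{Eq_Gamma-Delta}, the four $\Delta^0_{\bla}$ factors $\prod_{r=3}^{6}\Delta^0_{\bla}(t^{k_1-1}t_1/t_2\,\vert\,\cdot)$, and the output $\Ri_{\bmu}$ evaluated at the same vector collapses, via the Cauchy-type factorisation \eqref{Eq_Cauchy-type-2} and the reflection \eqref{Eq_Delta-symm}, to the pure-$\bmu$ factors and the $\spec{\bla}_{k_1}$-coupling ratio of \eqref{Eq_E-AnAFLT}.

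For the inductive step I use \eqref{Eq_kernel-c-factor} to read each edge density $\DeltaSe(\zar{r};\zar{r+1};c)$ as the kernel $\mathcal{K}_c(\zar{r};\zar{r+1})$, and then integrate out $\zar{1}$. Since the first two parameters of the node-$1$ vertex density coincide with those of $\Ri_{\bla}(\zar{1};c^{1-n}t_1,c^{1-n}t_2)$, running \eqref{Eq_kernel-spec} backwards rewrites the product of $\Ri_{\bla}(\zar{1};\dots)$ and the node-$1$ vertex density as a specialised kernel $\mathcal{K}_{c_1}(\zar{1};a_1\spec{\bla}_{k_1}/c_1)$ times the two-parameter density $\DeltaSv(\zar{1};tc^{n-1}/t_3,tc^{n-1}/t_4)$ times an explicit $\bla$-scalar. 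The $\zar{1}$-integral is then exactly the shape of Corollary~\ref{Cor_kernel-decomp-2} (with $z=\zar{1}$, $x=\zar{2}$, $k=k_1\leq\ell=k_2$), which evaluates it to a specialised kernel in $\zar{2}$ whose second alphabet is $a_1\spec{\bla}_{k_1}/c_1$ padded by a geometric progression $bc_1,bc_1t,\dots,bc_1t^{k_2-k_1-1}$, times further Gamma factors. Crucially, the balancing conditions \eqref{Eq_An-balancing} for ranks $r$ and $r+1$ reduce to $t^{k_r-k_{r-1}}t_{2r-1}t_{2r}=t^{k_{r+1}-k_r}t_{2r+1}t_{2r+2}$, and for $r=1$ this is exactly what makes that geometric progression the extension of the length-$k_1$ spectral vector of $\bla$ to length $k_2$ (equivalently, what lets it be absorbed into the geometric-string part of the generalised statement). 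Running \eqref{Eq_kernel-spec} forwards turns the kernel back into $\Ri_{\bla}$ in $\zar{2}$ times a vertex density, which after rewriting the Gamma factors of Corollary~\ref{Cor_kernel-decomp-2} via \eqref{Eq_reflection} is exactly the node-$2$ Selberg density of the rank $n-1$ problem; thus the integrand over $\zar{2},\dots,\zar{n}$ is that of the $\A_{n-1}$ statement with data shifted by one node, and the induction hypothesis applies. The scalar collected in this step is $\prod_i\Gammapq(t^i,t^{i-1}c^{2-2n}t_1t_2)$ together with the $r=3,4$ factors of $\prod_{r=3}^{2n}\Delta^0_{\bla}$, the latter again produced by \eqref{Eq_Gamma-Delta} from the Corollary~\ref{Cor_kernel-decomp-2} Gamma factors at the spectral vector. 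Iterating, the remaining $\Delta^0_{\bla}$ factors appear node by node, and the last passage (through node $n$, where the accumulated kernel meets the hybrid $\Ri_{\bmu}$ and the six-parameter vertex density) is handled as in the base case via Theorem~\ref{Thm_key}, producing the pure-$\bmu$ factors, the four $\Delta^0_{\bla}$ factors with the end parameters $t_{2n+1},\dots,t_{2n+4}$, and the $\spec{\bla}_{k_1}$-coupling term.

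The principal obstacle is the bookkeeping. One must verify at each step that the geometric padding produced by Corollary~\ref{Cor_kernel-decomp-2} is exactly what is needed to pass from $\spec{\bla}_{k_r;t;p,q}$ to $\spec{\bla}_{k_{r+1};t;p,q}$ --- this is precisely where the consecutive balancing conditions \eqref{Eq_An-balancing} are used --- and that the Gamma factors emitted complete the relevant vertex density to exactly the Selberg density of the lower-rank problem. One must also control the contours: at every stage the contour of the variable being integrated has to satisfy the hypotheses of the kernel identity invoked, and carrying this out along the chain forces the conditions \eqref{Eq_in1}--\eqref{Eq_in2}, in particular the terms $cp^{\mathbb{N}_0}q^{\mathbb{N}_0}C_{r\pm1}$ (which reflect the $\DeltaSe$ coupling) and the hypothesis $\abs{pq/t}<1$ (which comes from the $\mathcal{K}_c$-recursion \eqref{Eq_branching-rule}). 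Finally, collecting and matching all the accumulated scalars against the right-hand side of \eqref{Eq_E-AnAFLT}, and specialising $\bla=\bmu=\bzero$ to recover \eqref{Eq_E-AnSelberg}, finishes the proof.
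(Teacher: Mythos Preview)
Your strategy and your identification of the key tools (Corollary~\ref{Cor_kernel-decomp-2} for the inductive step, Theorem~\ref{Thm_key} for the base case) are the same as the paper's, but the execution as you describe it has a genuine gap. The step ``running \eqref{Eq_kernel-spec} forwards turns the kernel back into $\Ri_{\bla}$ in $\zar{2}$'' fails: after Corollary~\ref{Cor_kernel-decomp-2} the second argument of $\mathcal{K}_{cc_1}(\zar{2};\cdot)$ is the vector $\big(c^{1-n}t_1\spec{\bla}_{k_1}/c_1,\,bc_1,bc_1t,\dots,bc_1t^{k_2-k_1-1}\big)$ with $b=c^{n-1}t^{k_1-k_2+1}/t_3$, and for this to equal $a'\spec{\bla}_{k_2}/(cc_1)$ for some $a'$ one needs $t_3=t^{k_1}t_2$. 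The balancing conditions only fix the product $t_3t_4=t^{2k_1-k_2}t_1t_2$, not $t_3$ individually, so your claim that ``this is exactly what makes that geometric progression the extension of the length-$k_1$ spectral vector'' is incorrect. Your parenthetical about absorbing the padding into a ``generalised statement'' gestures at the right fix but does not pin it down.

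The paper resolves this by never converting back to $\Ri_{\bla}$ during the recursion. It formulates the generalisation (Proposition~\ref{Prop_xSelberg}) with the first interpolation function replaced by $\mathcal{K}_d(\zar{1};x)$ for a \emph{free} vector $x\in(\mathbb{C}^{\ast})^{k_1}$ (and an extra integer $k_0$ so that $d^2=c^{2-2n}t^{k_1-k_0-1}t_1t_2$). One pass of Corollary~\ref{Cor_kernel-decomp-2} then produces $\mathcal{K}_{cd}(\zar{2};x')$ with $x'$ the explicit padded vector of length $k_2$; this is again of the same form, so the recursion closes without ever needing $x'$ to be a spectral vector. Only after all $n$ integrations are done does one set $x=c^{1-n}t_1\spec{\bla}_{k_1}/d$ and apply \eqref{Eq_kernel-spec} and \eqref{Eq_Gamma-Delta} once to recover both Theorem~\ref{Thm_E-AnSelberg} and Theorem~\ref{Thm_E-AnAFLT}. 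If you reformulate your ``mildly more general statement'' in precisely this way, your argument goes through and coincides with the paper's.
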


For $n=1$ the theorem reduces to the $\A_1$ elliptic AFLT integral
\cite[Theorem~1.4]{ARW21}.
In that paper we applied the symmetry-breaking trick introduced in
\cite{Rains09} to obtain the following AFLT integral for Macdonald
polynomials \cite[Corollary~1.5]{ARW21}:
\begin{align*}
&\frac{1}{k!(2\pi\iup)^k}\Int_{\mathbb{T}^k}
P_{\la}(z;q,t)
P_{\mu}\bigg(\bigg[z+\frac{t/c-b}{1-t}\bigg];q,t\bigg) \\
&\qquad\qquad\quad\times
\prod_{i=1}^k\frac{(a/z_i,qz_i/a;q)_{\infty}}{(b/z_i,cz_i;q)_{\infty}}
\prod_{1\leq i<j\leq k}\frac{(z_i/z_j,z_j/z_i;q)_{\infty}}
{(tz_i/z_j,tz_j/z_i;q)_{\infty}}\,\frac{\dup z}{z} \\[1mm]
&\quad=b^{\abs{\la}}(t/c)^{\abs{\mu}}
P_{\la}\bigg(\bigg[\frac{1-t^k}{1-t}\bigg];q,t\bigg)
P_{\mu}\bigg(\bigg[\frac{1-bct^{k-1}}{1-t}\bigg];q,t\bigg) \\
&\qquad\times 
\prod_{i=1}^k\frac{(t,act^{k-l(\mu)-i}q^{\la_i},at^{1-i}/b,
qt^{i-1}b/a;q)_{\infty}}
{(q,t^i,bct^{i-1},at^{1-i}q^{\la_i}/b;q)_{\infty}}
\prod_{i=1}^k\prod_{j=1}^{l(\mu)}
\frac{(act^{k-i-j+1}q^{\la_i+\mu_j};q)_{\infty}}
{(act^{k-i-j}q^{\la_i+\mu_j};q)_{\infty}}.
\end{align*}
Here $\la\in\Part_k$, $\mu\in\Part$ and $a,b,c\in\mathbb{C}^*$ such that
$\abs{b},\abs{c}<1$.\footnote{In \cite[Corollary~1.5]{ARW21} this was 
inadvertently stated with $c=1$, which would require a small indentation of 
the contour $\mathbb{T}$ at $1$.}
Thus far, we have not been able to replicate this procedure for the full
$\A_n$ elliptic AFLT integral, nor for the $\A_n$ elliptic Selberg integral
of Theorem~\ref{Thm_E-AnSelberg}.
However, one can show that under the natural generalisation of the limiting 
procedure of our previous paper the evaluation of either the elliptic $\A_n$
Selberg integral or elliptic AFLT integral reduce to $q$-analogues of their
ordinary counterparts. 
To be more specific, assume that $0<p,q<1$ and scale
the parameters $t_1,\dots,t_{2n+4}$ by
\[
(t_{2r-1},t_{2r})\mapsto(t_{2r-1},p^{1/2}t_{2r}),
\]
for $1\leq r\leq n$ and
\[
\big(t_{2n+1},t_{2n+2},t_{2n+3},t_{2n+4}\big)\mapsto
\big(t_{2n+1},p^{-1/4}t_{2n+2},p^{1/4}t_{2n+3},p^{1/2}t_{2n+4}\big).
\]
Then the $p\to0$ limit of the right-hand side of \eqref{Eq_E-AnSelberg}
exists and may be expressed as a product of $q$-shifted factorials. 
Now let $\alpha_1,\dots,\alpha_n,\beta,\gamma$ be as in the $\A_n$ 
Selberg integral \eqref{Eq_AnSelberg}.
By setting $t=q^\gamma$, $t_{2n+2}t_{2n+3}=q^\beta$ and
$t_{2r-1}t_{2n+1}=q^{\alpha_r+\cdots+\alpha_n+(r-n)\gamma}$ for
$1\leq r\leq n$, so that by the balancing conditions
\eqref{Eq_An-balancing} we have
$t_{2r}t_{2n+4}
=q^{1-\beta-\alpha_r-\cdots-\alpha_n-(k_r-k_{r-1}+k_n+r-n-2)\gamma}$,
one obtains a $q$-analogue of the $\A_n$ Selberg integral evaluation, up
to factor induced by the $q$-reflection formula for the $q$-gamma function.
Taking the $q\to1$ limit of this expression then produces the $\A_n$
Selberg integral evaluation up to a scalar.
The same procedure works for \eqref{Eq_E-AnAFLT}, but one additionally
needs the limit of the elliptic interpolation functions
\cite[Equations~(6.7)]{ARW21}.

Setting $\bmu=0$ in Theorem~\ref{Thm_E-AnAFLT} leads to the following 
generalisation of the Kadell integral.

\begin{corollary}[Elliptic $\A_n$ Kadell integral]
With the same conditions as Theorem~\ref{Thm_E-AnSelberg} and for 
$\bla\in\mathscr{P}_{k_1}^2$, 
\begin{align*}
&\Big\langle\Ri_{\bla}\big(\zar{1};c^{1-n}t_1,c^{1-n}t_2\big)
\Big\rangle_{t_1,\dots,t_{2n+4}}^{k_1,\dots,k_n} \\
&\quad =\prod_{r=3}^{2n}
\Delta^0_{\bla}(t^{k_1-1}t_1/t_2\big\vert t^{k_1}t_1/t_r)
\prod_{r=2n+1}^{2n+4}
\Delta^0_{\bla}(t^{k_1-1}t_1/t_2\vert t^{k_1-1}t_1t_r).
\end{align*}
\end{corollary}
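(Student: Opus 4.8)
The plan is simply to put $\bmu=\bzero$ in the elliptic $\A_n$ AFLT integral \eqref{Eq_E-AnAFLT} of Theorem~\ref{Thm_E-AnAFLT} and to check that every $\bmu$-dependent quantity degenerates to $1$. The one observation behind this is elementary: for the empty bipartition $\bzero$ the index set $\{(i,j):(i,j)\in\bzero\}$ is empty, so $C^0_{\bzero}(z;q,t;p)$ and $C^{+}_{\bzero}(z;q,t;p)$ are empty products, whence $\Delta^0_{\bzero}(a\vert b_1,\dots,b_m;t;p,q)=1$ and $\obinomE{\bzero}{\bzero}_{[a,b];t;p,q}=1$ for any choice of the arguments.

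First I would look at the integrand on the left. By the definition \eqref{Eq_Rinew} the hybrid interpolation function $\Ri_{\bzero}\big(\zar{n};t_{2n+2}/t,t_{2n+3}/t;t\tau_n,t_{2n+4}\big)$ is a quotient of the skew interpolation function $\Ri_{\bzero/\bzero}([\,\cdots\,];\,\cdots\,;t;p,q)$ by a $\Delta^0_{\bzero}$; in the former, the sum \eqref{Eq_Riskew} retains only its $\bmu=\bzero$ term, which equals $1$ by the remark above, and the latter is $1$ for the same reason. Hence $\Ri_{\bzero}\big(\zar{n};t_{2n+2}/t,t_{2n+3}/t;t\tau_n,t_{2n+4}\big)=1$, and the AFLT average on the left of \eqref{Eq_E-AnAFLT} collapses to the Kadell average $\langle\Ri_{\bla}(\zar{1};c^{1-n}t_1,c^{1-n}t_2)\rangle^{k_1,\dots,k_n}_{t_1,\dots,t_{2n+4}}$.

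On the right of \eqref{Eq_E-AnAFLT}, the same vanishing $\Delta^0_{\bzero}=1$ annihilates every factor in the second and third lines: the products over $r=2n+2,2n+3$ and over $2\leq r\leq n$, and the ratio of the two $\Delta^0_{\bmu}$'s carrying the spectral vector $\spec{\bla}_{k_1;t;p,q}$, are all equal to $1$. What survives is precisely the first line of \eqref{Eq_E-AnAFLT}, which is the asserted right-hand side. All hypotheses --- the balancing conditions \eqref{Eq_An-balancing}, the contour requirements \eqref{Eq_in1}--\eqref{Eq_in2}, and $\bla\in\Part_{k_1}^2$ --- are inherited unchanged from Theorem~\ref{Thm_E-AnAFLT}. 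There is no genuine obstacle here; the only point needing a moment's care is the confirmation that the interpolation function and the extended elliptic binomials take the value $1$ (and not some less trivial boundary value) at $\bmu=\bzero$, and that is immediate from the conventions fixed in Sections~\ref{Sec_elliptic} and~\ref{Sec_EIF}.
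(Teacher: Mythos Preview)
Your proof is correct and follows exactly the approach indicated in the paper, which simply states that ``Setting $\bmu=0$ in Theorem~\ref{Thm_E-AnAFLT} leads to the following generalisation of the Kadell integral.'' Your additional verification that $\Ri_{\bzero}\big(\zar{n};\dots\big)=1$ via \eqref{Eq_Rinew}, \eqref{Eq_Riskew} and \eqref{Eq_simplecases}, and that every $\Delta^0_{\bzero}$ factor collapses to $1$, fills in the routine details the paper omits.
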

Similarly, imposing the constraint $t_{2n+2}t_{2n+3}=t$ and using 
\eqref{Eq_v1v2=1} results in a generalisation of the Hua--Kadell integral.

\begin{corollary}[Elliptic $\A_n$ Hua--Kadell integral]
Assume the same conditions as in Theorem~\ref{Thm_E-AnSelberg} with the 
additional constraint $t_{2n+2}t_{2n+3}=t$
Then, for $\bla,\bmu\in\mathscr{P}_{k_1}^2$,
\begin{align*}
&\Big\langle\Ri_{\bla}\big(\zar{1};c^{1-n}t_1,c^{1-n}t_2\big)
\Ri_{\bmu}\big(\zar{n};t_{2n+1},t_{2n+4}\big)
\Big\rangle_{t_1,\dots,t_{2n+4}}^{k_1,\dots,k_n} \\ 
&\quad=\prod_{r=3}^{2n}
\Delta^0_{\bla}(t^{k_1-1}t_1/t_2\vert t^{k_1}t_1/t_r) 
\prod_{r=2n+1}^{2n+4}
\Delta^0_{\bla}(t^{k_1-1}t_1/t_2\big\vert t^{k_1-1}t_1t_r) \\[1mm]
&\qquad\times \prod_{r=2n+2}^{2n+3}
\Delta^0_{\bmu}(t^{k_n-1}t_{n+1}/t_{2n+4}\vert t^{k_n-1}t_{2n+1}t_r) \\[1mm]
&\qquad\times\prod_{r=2}^n
\frac{\Delta^0_{\bmu}(t^{k_n-1}t_{2n+1}/t_{2n+4}\vert
t^{k_n-1}t_{2r-1}t_{2n+1})} 
{\Delta^0_{\bmu}(t^{k_n-1}t_{2n+1}/t_{2n+4}\vert
t^{k_n+k_r-k_{r-1}-1}t_{2r-1}t_{2n+1})} \\[1mm] 
&\qquad\times
\frac{\Delta^0_{\bmu}(t^{k_n-1}t_{2n+1}/t_{2n+4}\vert 
t^{k_n-1}t_1t_{2n+1}\spec{\bla}_{k_1;t;p,q})} 
{\Delta^0_{\bmu}(t^{k_n-1}t_{2n+1}/t_{2n+4}\vert 
t^{k_n}t_1t_{2n+1}\spec{\bla}_{k_1;t;p,q})}.
\end{align*}
\end{corollary}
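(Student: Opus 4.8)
The plan is to obtain the Hua--Kadell corollary as the specialisation of the elliptic $\A_n$ AFLT integral of Theorem~\ref{Thm_E-AnAFLT} at $t_{2n+2}t_{2n+3}=t$. This is a single codimension-one restriction on the parameters, compatible with the balancing conditions \eqref{Eq_An-balancing} (it merely removes one degree of freedom among the $t_i$), so Theorem~\ref{Thm_E-AnAFLT} continues to hold verbatim for such parameters and it only remains to simplify the two sides of \eqref{Eq_E-AnAFLT}. Throughout I would use that under this restriction $\tau_n=t_{2n+1}t_{2n+2}t_{2n+3}/t^2=t_{2n+1}/t$, so that $t^{a}\tau_n=t^{a-1}t_{2n+1}$ for every exponent $a$.

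For the left-hand side, the first interpolation function $\Ri_{\bla}(\zar{1};c^{1-n}t_1,c^{1-n}t_2)$ is untouched. The second is the hybrid interpolation function $\Ri_{\bmu}(\zar{n};t_{2n+2}/t,t_{2n+3}/t;t\tau_n,t_{2n+4})$ in the sense of \eqref{Eq_Rinew}, and the product of its two auxiliary arguments $t_{2n+2}/t$ and $t_{2n+3}/t$ is $t_{2n+2}t_{2n+3}/t^2=1/t$. Hence the reduction rule recorded immediately after the definition \eqref{Eq_Rinew} (a consequence of \eqref{Eq_v1v2=1}) removes this pair of arguments and leaves $\Ri_{\bmu}(\zar{n};t\tau_n,t_{2n+4})=\Ri_{\bmu}(\zar{n};t_{2n+1},t_{2n+4})$, which is exactly the second interpolation function in the corollary; the hypothesis $\bmu\in\mathscr{P}_{k_1}^2\subseteq\mathscr{P}_{k_n}^2$ ensures this is a genuine $\mathrm{BC}_{k_n}$-symmetric interpolation function.

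For the right-hand side I would simply substitute $t^{k_n}\tau_n=t^{k_n-1}t_{2n+1}$ throughout the evaluation \eqref{Eq_E-AnAFLT}. The two products of $\Delta^0_{\bla}$ factors do not involve $\tau_n$ and are unchanged. In the remaining factors, $t^{k_n}\tau_n/t_{2n+4}$ becomes $t^{k_n-1}t_{2n+1}/t_{2n+4}$, $t^{k_n}t_{2r-1}\tau_n$ becomes $t^{k_n-1}t_{2r-1}t_{2n+1}$, $t^{k_n+k_r-k_{r-1}}t_{2r-1}\tau_n$ becomes $t^{k_n+k_r-k_{r-1}-1}t_{2r-1}t_{2n+1}$, and $t^{k_n}t_1\tau_n\spec{\bla}_{k_1;t;p,q}$, $t^{k_n+1}t_1\tau_n\spec{\bla}_{k_1;t;p,q}$ become $t^{k_n-1}t_1t_{2n+1}\spec{\bla}_{k_1;t;p,q}$, $t^{k_n}t_1t_{2n+1}\spec{\bla}_{k_1;t;p,q}$ respectively. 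Reading these off reproduces exactly the right-hand side of the corollary, finishing the derivation.

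The one point I would be most careful about is the precise shape of the reduction rule for the hybrid interpolation function: because of the overall shift $v_i\mapsto t^{1/2}v_i$ built into \eqref{Eq_Rinew}, the two auxiliary arguments must have product $1/t$ rather than $1$ (as in the bare skew identity \eqref{Eq_v1v2=1}), which is exactly why the constraint to impose here is $t_{2n+2}t_{2n+3}=t$ and not $t_{2n+2}t_{2n+3}=1$; a stray power of $t$ at this step would silently yield the wrong evaluation. I would also check that the contour data \eqref{Eq_in1}--\eqref{Eq_in2} stay admissible under the constraint, but since these only stipulate which sequences of poles lie inside the curves $C_r$ and no new coincidences are forced by $t_{2n+2}t_{2n+3}=t$ under the standing genericity assumption, no difficulty arises. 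Everything else is routine bookkeeping.
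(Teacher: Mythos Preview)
Your proposal is correct and follows exactly the approach indicated in the paper, which derives the corollary from Theorem~\ref{Thm_E-AnAFLT} by imposing $t_{2n+2}t_{2n+3}=t$ and invoking the reduction rule following \eqref{Eq_Rinew} (i.e., \eqref{Eq_v1v2=1}) to collapse the hybrid interpolation function to $\Ri_{\bmu}(\zar{n};t_{2n+1},t_{2n+4})$, then substituting $\tau_n=t_{2n+1}/t$ on the right. Your extra care about the $1/t$ (rather than $1$) product condition and the length constraint on $\bmu$ is well placed.
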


\subsection{Proof of Theorems~\ref{Thm_E-AnSelberg} and \ref{Thm_E-AnAFLT}}

Let $0\leq k_1\leq k_2\leq\cdots\leq k_n$, $c:=(pq/t)^{1/2}$
(with a branch of $c$ fixed) and let  $t_1,\dots,t_{2n+4}$ satisfy the
balancing conditions \eqref{Eq_An-balancing}, i.e.,
\[
t^{k_1+k_n-2}t_1t_2t_{2n+1}t_{2n+2}t_{2n+3}t_{2n+4}=pq
\]
and
\[
t^{k_r-k_{r-1}+k_n-2}t_{2r-1}t_{2r}t_{2n+1}t_{2n+2}t_{2n+3}t_{2n+4}=pq
\]
for $2\leq r\leq n$.
The reason for restating these conditions as per the above, separating out
the $r=1$ case, is that in what follows we will introduce an integer $k_0$
which, unlike in Theorem~\ref{Thm_E-AnSelberg}, will not be $0$.

The task is to evaluate the integral
\begin{align}\label{Eq_Slamu}
&S^{k_1,\dots,k_n}_{\bla,\bmu}(t_1,\dots,t_{2n+4}) \\[1mm]
&\quad:=\Int \Big(\Ri_{\bla}\big(\zar{1};c^{1-n}t_1,c^{1-n}t_2\big) 
\Ri_{\bmu}\big(\zar{n};t_{2n+2}/t,t_{2n+3}/t;t\tau_n,t_{2n+4}\big) 
\notag \\
&\qquad\qquad \times
\DeltaS\big(\zar{1},\dots,\zar{n};t_1,\dots,t_{2n+4};c\big)\Big)\,
\frac{\dup\zar{1}}{\zar{1}}\cdots\frac{\dup\zar{n}}{\zar{n}},
\notag
\end{align}
where $\tau_n:=t_{2n+1}t_{2n+2}t_{2n+3}/t^2$.
To this end we consider the more general problem of evaluating
\begin{align*}
&S^{k_0,k_1,\dots,k_n}_{\bmu}(x;t_1,\dots,t_{2n+4}) \\[1mm] 
&\quad := \Int \Bigg( \mathcal{K}_d\big(\zar{1};x\big)
\Ri_{\bmu}\big(\zar{n};t_{2n+2}/t,t_{2n+3}/t;t\tau_n,t_{2n+4}\big) \\
&\qquad\qquad\; \times 
\frac{\DeltaS\big(\zar{1},\dots,\zar{n};t_1,\dots,t_{2n+4};c\big)}
{\prod_{i=1}^{k_1}\prod_{r=1}^2\Gamma\big(c^{1-n}t_r(\zar{1}_i)^{\pm}\big)}
\Bigg)\,
\frac{\dup\zar{1}}{\zar{1}}\cdots\frac{\dup\zar{n}}{\zar{n}}.
\end{align*}
Here $k_0,k_1,\dots,k_n$ are integers such that
$0\leq k_1\leq\cdots\leq k_n$, $x:=(x_1,\dots,x_{k_1})$, 
\begin{equation}\label{Eq_d}
d^2:=c^{2-2n}t^{k_1-k_0-1}t_1t_2
\end{equation}
and the $t_1,\dots,t_{2n+4}$ satisfy the modified balancing conditions
\begin{equation}\label{Eq_An-balancing-2}
t^{k_r-k_{r-1}+k_n-2}t_{2r-1}t_{2r}t_{2n+1}t_{2n+2}t_{2n+3}t_{2n+4}=pq
\end{equation}
for all $1\leq r\leq n$.
By \eqref{Eq_kernel-spec} and \eqref{Eq_reflection},
\begin{align}\label{Eq_xtonox}
S^{k_1,\dots,k_n}_{\bla,\bmu}(t_1,\dots,t_{2n+4}) 
&=\prod_{i=1}^{k_1}
(c^{2n-2}pq/t_1t_2)^{-2\lar{1}_i\lar{2}_i}
\Gamma(t^i,c^{2-2n}t^{i-1}t_1t_2) \\
&\quad\times
S^{0,k_1,\dots,k_n}_{\bmu}\big(c^{1-n}t_1\spec{\bla}_{k_1}/d;
t_1,\dots,t_{2n+4}\big), \notag
\end{align}
where $d$ on the right is given by \eqref{Eq_d} with $k_0=0$.

\begin{proposition}\label{Prop_xSelberg}
With the parameters satisfying the conditions
\eqref{Eq_d} and \eqref{Eq_An-balancing-2},
\begin{align}\label{Eq_Prop_xSelberg}
&S^{k_0,k_1,\dots,k_n}_{\bmu}(x;t_1,\dots,t_{2n+4}) \\
&\quad=\prod_{i=1}^{k_1} 
\bigg(\Delta^0_{\bmu}\big(t^{k_n}\tau_n/t_{2n+4}\big\vert 
t^{k_n}c^{n-1}d\tau_n x_i^{\pm}\big) \notag \\
&\qquad\qquad\times
\prod_{r=3}^{2n} \Gamma\big(c^{n-1}dtx_i^{\pm}/t_r\big)
\prod_{r=2n+1}^{2n+4} \Gamma\big(c^{n-1}dt_rx_i^{\pm}\big)\bigg) 
\notag \\
&\qquad\times
\prod_{r=2}^n \prod_{i=1}^{k_r-k_{r-1}}
\Gamma(t^i,t^{i-1}c^{2r-2n}t_{2r-1}t_{2r})
\prod_{2n+1\leq r<s\leq 2n+4}\,\prod_{i=1}^{k_n}\Gamma(t^{i-1}t_rt_s) 
\notag \\
&\qquad\times
\prod_{2\leq r<s\leq n}
\prod_{i=1}^{k_r-k_{r-1}}
\Gamma(t^it_{2r-1}/t_{2s-1},t^it_{2r}/t_{2s-1},
t^it_{2r-1}/t_{2s},t^it_{2r}/t_{2s}) \notag \\
&\qquad\times
\prod_{r=2}^n \prod_{s=2n+1}^{2n+4} \prod_{i=1}^{k_r-k_{r-1}}
\Gamma(t^{i-1}t_{2r-1}t_s,t^{i-1}t_{2r}t_s) \notag \\
&\qquad \times
\prod_{r={2n+2}}^{2n+3}\Delta^0_{\bmu}
(t^{k_n}\tau_n/t_{2n+4}\vert t^{k_n-1}t_{2n+1}t_r) 
\notag \\
&\qquad \times
\prod_{r=2}^n \frac{\Delta^0_{\bmu}
(t^{k_n}\tau_n/t_{2n+4}\vert t^{k_n}t_{2r-1}\tau_n)}
{\Delta^0_{\bmu}(t^{k_n}\tau_n/t_{2n+4}\vert 
t^{k_n+k_r-k_{r-1}}t_{2r-1}\tau_n)}. \notag
\end{align}
\end{proposition}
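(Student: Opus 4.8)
The plan is to prove Proposition~\ref{Prop_xSelberg} by induction on the rank $n$, using the kernel decomposition of Corollary~\ref{Cor_kernel-decomp-2} to integrate out the innermost block of variables $\zar{1}$ at each stage. Throughout, the contour conditions \eqref{Eq_in1}--\eqref{Eq_in2} are tailored to meet the interior-point requirements of Corollary~\ref{Cor_kernel-decomp-2} (and of Theorem~\ref{Thm_key}), so the contour manipulations below are legitimate and I will not dwell on them.

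\emph{Base case $n=1$.} Here $\DeltaS(\zar{1};t_1,\dots,t_6;c)=\DeltaSv(\zar{1};t_1,\dots,t_6;t)$, so dividing by $\prod_i\prod_{r=1}^2\Gamma(t_r(\zar{1}_i)^{\pm})$ deletes the parameters $t_1,t_2$ and leaves $\DeltaSv(\zar{1};t_3,t_4,t_5,t_6;t)$. After the relabelling $(c,t_1,t_2,t_3,t_4,v_1,v_2)\mapsto(d,t_3,t_6,t_5,t_4,t_4/t,t_5/t)$ and $k\mapsto k_1$, the integral $S^{k_0,k_1}_{\bmu}(x;t_1,\dots,t_6)$ is precisely the left-hand side of Theorem~\ref{Thm_key}: the constraint $t_4=tv_1$ there becomes $v_1=t_4/t$, and the balancing $d^2t^{k_1-1}t_3t_4t_5t_6=pq$ is the $r=1$ instance of \eqref{Eq_An-balancing-2} rewritten via \eqref{Eq_d}. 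Theorem~\ref{Thm_key} evaluates $S^{k_0,k_1}_{\bmu}$ as an explicit product of elliptic gamma factors times a ratio of two $\Delta^0_{\bmu}$'s times a hybrid interpolation function $\Ri_{\bmu}(x;cv_1,v_2/c;ctt_1v_1v_2,ct_2)$; since $t^{k_1}(ctt_1v_1v_2)(ct_2)=pq$ --- once more the balancing --- this interpolation function is of Cauchy type and factorises by \eqref{Eq_Cauchy-type-2} into a product of single-argument $\Delta^0_{\bmu}$'s. One of these cancels the denominator $\Delta^0_{\bmu}$, and the remainder is exactly the right-hand side of \eqref{Eq_Prop_xSelberg} for $n=1$.

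\emph{Inductive step $n\geq2$.} In the integrand of $S^{k_0,k_1,\dots,k_n}_{\bmu}$ the variables $\zar{1}$ occur only in $\mathcal{K}_d(\zar{1};x)$, in $\DeltaSv(\zar{1};c^{1-n}t_1,c^{1-n}t_2,tc^{n-1}/t_3,tc^{n-1}/t_4;t)$ --- from which the explicit denominator strips $c^{1-n}t_1,c^{1-n}t_2$ --- and in $\DeltaSe(\zar{1};\zar{2};c)$. Hence the $\zar{1}$-integral is an instance of Corollary~\ref{Cor_kernel-decomp-2} with $(z,y,x)\mapsto(\zar{1},x,\zar{2})$, $(k,\ell)\mapsto(k_1,k_2)$, the present $d$, and $b$ fixed by $\{t^{k_2-k_1}b,\,t/bd^2\}=\{tc^{n-1}/t_3,\,tc^{n-1}/t_4\}$; the two equations for $b$ are consistent exactly because dividing the $r=1$ and $r=2$ instances of \eqref{Eq_An-balancing-2} gives the key relation $t_1t_2=t^{k_2-2k_1+k_0}t_3t_4$. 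Applying the corollary turns $\int(\cdots)\,\frac{\dup\zar{1}}{\zar{1}}$ into $\mathcal{K}_{cd}(\zar{2};x_1,\dots,x_{k_1},bd,bdt,\dots,bdt^{k_2-k_1-1};t)$ times an explicit product of gamma factors. Using the relation above one checks that $(cd)^2=c^{2-2(n-1)}t^{k_2-k_1-1}t_3t_4$, that the denominator factors $\prod_i\Gamma(bcd^2(\zar{2}_i)^{\pm},ct^{k_1-k_2+1}(\zar{2}_i)^{\pm}/b)$ carried onto the remaining integrand are precisely the $t_3,t_4$-gammas that must be removed from $\DeltaSv(\zar{2};\cdots)$, and that $\tau_{n-1}=\tau_n$ with the factor $\Ri_{\bmu}(\zar{n};\cdots)$ unchanged. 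Consequently the remaining $(k_2+\cdots+k_n)$-fold integral is $S^{k_1,k_2,\dots,k_n}_{\bmu}(x';t_3,\dots,t_{2n+4})$, the rank-$(n-1)$ object with alphabet $x'=(x_1,\dots,x_{k_1},bd,\dots,bdt^{k_2-k_1-1})$ and data $k_r\mapsto k_{r+1}$, $t_r\mapsto t_{r+2}$, to which the induction hypothesis applies.

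\emph{Reconciliation.} Split the product over the $k_2$-element alphabet $x'$ in the rank-$(n-1)$ right-hand side into a genuine part ($x_1,\dots,x_{k_1}$) and a spectral part ($bd,\dots,bdt^{k_2-k_1-1}$). The genuine part combined with the prefactor $\prod_i\Gamma(t^{k_2-k_1}bdx_i^{\pm},tx_i^{\pm}/bd)=\prod_i\Gamma(c^{n-1}dtx_i^{\pm}/t_3,c^{n-1}dtx_i^{\pm}/t_4)$ from the corollary reassembles the $\prod_{i=1}^{k_1}(\cdots)$ block of \eqref{Eq_Prop_xSelberg}. For the spectral part, the identities $c^{n-1}d\cdot bdt^j=t_3t^j$ and $c^{n-1}d/(bdt^j)=t_4t^{k_2-k_1-1-j}$ make every $\Gamma$ and $\Delta^0_{\bmu}$ factor evaluated at $bdt^j$ a function of $t_3t^j$ or of $t_4t^{k_2-k_1-1-j}$, and as $j$ runs over $0,\dots,k_2-k_1-1$ these telescope --- using only $\Gamma(z)\Gamma(pq/z)=1$ together with the single-argument case $\Delta^0_{\bmu}(a\vert b)\Delta^0_{\bmu}(a\vert pqa/b)=1$ of \eqref{Eq_Delta-symm} --- into exactly the ``$r=2$'' terms of the products $\prod_{r=2}^n$, $\prod_{2\leq r<s\leq n}$ and $\prod_{r=2}^n\prod_{s=2n+1}^{2n+4}$ in \eqref{Eq_Prop_xSelberg}; the prefactor $\prod_i\Gamma(t^i)/\Gamma(t^i/d^2)$ supplies the ``$r=2$'' term of $\prod_{r=2}^n\prod_i\Gamma(t^i,t^{i-1}c^{2r-2n}t_{2r-1}t_{2r})$, while all ``$r\geq3$'' terms are the rank-$(n-1)$ formula after the relabelling. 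The main obstacle is precisely this reconciliation: marshalling the numerous families of $\Gamma$ and $\Delta^0_{\bmu}$ factors so that the geometric-progression telescoping and the shift $r\mapsto r+1$ dovetail without index or sign errors --- equivalently, verifying that the value of $b$ and the specialisation points $bd,\dots,bdt^{k_2-k_1-1}$ delivered by Corollary~\ref{Cor_kernel-decomp-2} are exactly the ones demanded by \eqref{Eq_Prop_xSelberg}.
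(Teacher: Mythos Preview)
Your proposal is correct and follows essentially the same route as the paper: induction on $n$ with the base case $n=1$ handled by Theorem~\ref{Thm_key} (using the identical substitution $(c,t_1,t_2,t_3,v_1,v_2)\mapsto(d,t_3,t_6,t_5,t_4/t,t_5/t)$) followed by the Cauchy-type factorisation \eqref{Eq_Cauchy-type-2}, and the inductive step carried out by applying Corollary~\ref{Cor_kernel-decomp-2} to the $\zar{1}$-integral to obtain the recursion $S^{k_0,k_1,\dots,k_n}_{\bmu}(x;\dots)=\text{(prefactor)}\cdot S^{k_1,k_2,\dots,k_n}_{\bmu}(x';\dots)$. Your ``reconciliation'' paragraph spells out the telescoping that the paper summarises as ``a straightforward but somewhat tedious calculation''; note that with the paper's specific choice $b=c^{n-1}t^{k_1-k_2+1}/t_3$ one actually gets $c^{n-1}d\cdot bd=t_4$ and $c^{n-1}d/(bd)=t^{k_2-k_1-1}t_3$, so your two identities hold with $t_3\leftrightarrow t_4$ swapped relative to that convention --- a harmless relabelling since your set equality for $b$ allows either assignment.
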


It is readily checked using \eqref{Eq_xtonox} that by this implies
Theorems~\ref{Thm_E-AnSelberg} and \ref{Thm_E-AnAFLT}.
In particular, from \eqref{Eq_d} and the $r=1$ case of
\eqref{Eq_An-balancing-2}, $pq=t^{k_n-1}c^{2n-2}d^2\tau_n t_{2n+4}$.
Combined with \eqref{Eq_Delta-symm} this yields
\begin{align*}
&\prod_{i=1}^{k_1} 
\Delta^0_{\bmu}\big(t^{k_n}\tau_n/t_{2n+4}\big\vert 
t^{k_n}c^{n-1}d\tau_n x_i^{\pm}\big)\big|_
{x_i\mapsto c^{1-n}t_1(\spec{\bla}_{k_1})_i/d} \\
&\quad=\frac{\Delta^0_{\bmu}(t^{k_n}\tau_n/t_{2n+4}\vert 
t^{k_n}t_1\tau_n\spec{\bla}_{k_1})} 
{\Delta^0_{\bmu}(t^{k_n}\tau_n/t_{2n+4}\vert 
t^{k_n+1}t_1\tau_n\spec{\bla}_{k_1})}.
\end{align*}
Furthermore, by the same specialisation of the $x_i$,
\eqref{Eq_d}, \eqref{Eq_An-balancing-2} and \eqref{Eq_Gamma-Delta} with
\[
n\mapsto k_1,\; a\mapsto t^{k_1}t_1/t_r,\;
b\mapsto c^{2n-2}d^2t^{2-k_1}/t_1t_r=t^{1-k_0}t_2/t_r,
\]
and
\[
n\mapsto k_1,\; a\mapsto t^{k_1-1}t_1t_r,\;
b\mapsto c^{2n-2}d^2t^{1-k_1}t_r/t_1=t^{-k_0}t_2t_r,
\]
respectively, we get
\begin{align*}
\prod_{i=1}^{k_1} 
\prod_{r=3}^{2n} \Gamma\big(c^{n-1}dtx_i^{\pm}/t_r\big)
&\mapsto
(c^{2n-2}t^{k_1-k_n})^{2\sum_{i=1}^{k_1}\lar{1}_i\lar{2}_i} \\
&\quad\times
\prod_{r=3}^{2n} \Gamma(t^it_1/t_r,t^{i-k_0}t_2/t_r)
\Delta^0_{\bla}(t^{k_0+k_1-1}t_1/t_2\vert t^{k_1}t_1/t_r) 
\end{align*}
and
\begin{align*}
\prod_{i=1}^{k_1} 
\prod_{r=2n+1}^{2n+4} \Gamma\big(c^{n-1}dt_rx_i^{\pm}\big) 
&\mapsto
\Big(\frac{pqt^{k_0-k_1+k_n}}{t_1t_2}\Big)^
{2\sum_{i=1}^{k_1}\lar{1}_i\lar{2}_i} \\
&\quad\times
\prod_{r=2n+1}^{2n+4} 
\Gamma\big(t^{i-1}t_1t_r,t^{i-k_0-1}t_2t_r\big)
\Delta^0_{\bla}(t^{k_0+k_1-1}t_1/t_2\vert t^{k_1-1}t_1t_r).
\end{align*}
Combining these three results, setting $k_0=0$ and using 
\eqref{Eq_xtonox} implies Theorems~\ref{Thm_E-AnSelberg}
and~\ref{Thm_E-AnAFLT}.

\begin{proof}[Proof of Proposition~\ref{Prop_xSelberg}]
Recalling the definition of the $\A_n$ Selberg density
\eqref{Eq_An-S-density}, and assuming that $n\geq 2$, we have
\begin{align*}
&S^{k_0,k_1,\dots,k_n}_{\bmu}(x;t_1,\dots,t_{2n+4}) \\
&\quad=
\Int \Big( \mathcal{K}_d\big(\zar{1};x\big)
\Ri_{\bmu}\big(\zar{n};t_{2n+2}/t,t_{2n+3}/t;t\tau_n,t_{2n+4}\big) \\
&\qquad\qquad \times 
\DeltaSv\big(\zar{1};c^{n-1}t/t_3,c^{n-1}t/t_4\big)
\DeltaSe\big(\zar{1};\zar{2};c\big) \\
&\qquad\qquad \times 
\DeltaS\big(\zar{2},\dots,\zar{n};t_3,\dots,t_{2n+4};c\big)
\Big)\,
\frac{\dup\zar{1}}{\zar{1}}\cdots\frac{\dup\zar{n}}{\zar{n}}.
\end{align*}
By Corollary~\ref{Cor_kernel-decomp-2} with $d$ as given in
\eqref{Eq_d} and
\[
(k,\ell,b,x,y,z)\mapsto 
\big(k_1,k_2,c^{n-1}t^{k_1-k_2+1}/t_3,\zar{2},x,\zar{1}\big),
\]
we can carry out the integration over $\zar{1}$.
In particular we note that the above substitutions imply that
\[
t/bd^2\mapsto c^{n-1}t^{k_0-2k_1+k_2+1}t_3/t_1t_2=c^{n-1}t/t_4,
\]
where the last equality follows from by taking the ratio of the
balancing conditions \eqref{Eq_An-balancing-2} for $r=1$ and $r=2$.
From these same balancing conditions it also follows that
\[
(cd)^2=c^{4-2n}t^{k_2-k_1-1}t_3t_4.
\]
As a result,
\begin{align*}
&S^{k_0,k_1,\dots,k_n}_{\bmu}(x;t_1,\dots,t_{2n+4}) \\
&\quad=\prod_{i=1}^{k_2-k_1}\Gamma(t^i,t^{i-1}c^{4-2n}t_3t_4)
\prod_{i=1}^{k_1}
\Gamma\big(c^{n-1}dtx_i^{\pm}/t_3,c^{n-1}dtx_i^{\pm}/t_4\big)\\[1mm]
&\qquad\times
S^{k_1,k_2,\dots,k_n}_{\bmu}(x';t_3,\dots,t_{2n+4}),
\end{align*}
where
\[
x':=\big(x_1,\dots,x_{k_1},c^{n-1}dt^{k_1-k_2+1}/t_3,
c^{n-1}dt^{k_1-k_2+2}/t_3,\dots,c^{n-1}d/t_3\big).
\]
A straightforward but somewhat tedious calculations shows that the
right-hand side of \eqref{Eq_Prop_xSelberg} satisfies the same recursion.
The proof is thus reduced to checking validity of the claim for $n=1$.
This is
\[
S^{k_0,k_1}_{\bmu}(x;t_1,\dots,t_6)=
\int \mathcal{K}_d\big(z;x\big)
\Ri_{\bmu}\big(z;t_4/t,t_5/t;t\tau_1,t_6\big) 
\DeltaSv\big(z;t_3,t_4,t_5,t_6\big)\,
\frac{\dup z}{z},
\]
where $z=(z_1,\dots,z_{k_1})$, $d^2:=t^{k_1-k_0-1}t_1t_2$ and
$t^{2k_1-k_0-2}t_1\cdots t_6=pq$.
But this is nothing but Theorem~\ref{Thm_key} with
\[
(n,c,t_1,t_2,t_3,v_1,v_2)\mapsto (k_1,d,t_3,t_6,t_5,t_4/t,t_5/t).
\]
Hence
\begin{align*}
S^{k_0,k_1}_{\bmu}(x;t_1,\dots,t_6)
&=\prod_{i=1}^{k_1}\bigg(
\prod_{3\leq r<s\leq 6} \Gamma(t^{i-1}t_rt_s)
\prod_{r=3}^6 \Gamma\big(dt_rx_i^{\pm}\big) \bigg) \\[1mm]
&\quad\times
\frac{\Delta^0_{\bmu}(t^{k_1-2}t_3t_4t_5/t_6\vert t^{k_1-1}t_3t_4)}
{\Delta^0_{\bmu}(t^{k_1-2}t_3t_4t_5/t_6\vert d^2t^{k_1-1}t_3t_4)}\,
\Ri_{\bmu}(x;dt_4/t,t_5/td;dt_3t_4t_5/t,dt_6).
\end{align*}
Since
\[
t^{k_1}(dt_3t_4t_5/t)(dt_6)=t^{2k_1-k_0-2}t_1t_2t_3t_4t_5t_6=pq,
\]
the interpolation function on the right is of Cauchy type and
factors by \eqref{Eq_Cauchy-type-2}.
Therefore,
\begin{align*}
S^{k_0,k_1}_{\bmu}(x;t_1,\dots,t_6)
&=\prod_{i=1}^{k_1}\bigg(
\prod_{3\leq r<s\leq 6} \Gamma(t^{i-1}t_rt_s)
\prod_{r=3}^6 \Gamma\big(dt_rx_i^{\pm}\big) \bigg) \\[1mm]
&\quad\times
\prod_{r=4}^5\Delta^0_{\bmu}(t^{k_1}\tau_1/t_6\vert t^{k_1-1}t_3t_r)
\prod_{i=1}^{k_1}
\Delta^0_{\bmu}\big(t^{k_1}\tau_1/t_6\vert t^{k_1}d\tau_1 x_i^{\pm}\big).
\end{align*}
This is exactly the right-hand side of \eqref{Eq_Prop_xSelberg}
for $n=1$.
\end{proof}

To conclude this section we remark that for $k_1=k_2=\dots=k_n=k$
the evaluation of \eqref{Eq_Slamu} that does not require the heavy
machinery of the elliptic interpolation kernel.
As per the above proof, for $n\geq 2$,
\begin{align*}
&S^{k_1,\dots,k_n}_{\bla,\bmu}(t_1,\dots,t_{2n+4}) \\
&\quad=
\Int \Big( 
\Ri_{\bla}\big(\zar{1};c^{1-n}t_1,c^{1-n}t_2\big) 
\Ri_{\bmu}\big(\zar{n};t_{2n+2}/t,t_{2n+3}/t;t\tau_n,t_{2n+4}\big) \\
&\qquad\qquad \times 
\DeltaSv\big(\zar{1};c^{1-n}t_1,t^{1-n}t_2,c^{n-1}t/t_3,c^{n-1}t/t_4\big)
\DeltaSe\big(\zar{1};\zar{2};c\big) \\
&\qquad\qquad \times 
\DeltaS\big(\zar{2},\dots,\zar{n};t_3,\dots,t_{2n+4};c\big)
\Big)\,
\frac{\dup\zar{1}}{\zar{1}}\cdots\frac{\dup\zar{n}}{\zar{n}}.
\end{align*}
If $k_1=k_2=k$ then the integral over $\zar{1}$ is exactly
the elliptic beta integral \eqref{Eq_vdBult} with
\[
(t_1,t_2,t_3,t_4)\mapsto(c^{1-n}t_1,c^{1-n}t_2,c^{n-1}t/t_3,c^{n-1}t/t_4),
\]
$\bmu\mapsto\bla$ and $x_i\mapsto\zar{2}_i$ for $1\leq i\leq k$.
In particular, by \eqref{Eq_An-balancing}, $t^k t_1t_2/t_3t_4=1$, as required.
Therefore,
\begin{align*}
S^{k,k,k_3,\dots,k_n}_{\bla,\bmu}(t_1,\dots,t_{2n+4})&=
\prod_{i=1}^k \bigg(\frac{\Gamma(t^{i-1}c^{2-2n}t_1t_2)}
{\Gamma(t^{i-1}c^{4-2n}t_1t_2)} 
\prod_{r=1}^2\prod_{s=3}^4 \Gamma(t^it_r/t_s)\bigg)\\
& \quad \times
\Delta^0_{\bla}(t^{k-1}t_1/t_2\big\vert t^k t_1/t_3,t^kt_1/t_4)
S^{k,k_3,\dots,k_n}_{\bla,\bmu}(t_1,t_2,t_5,\dots,t_{2n+4}),
\end{align*}
where $t_3t_4=t^kt_1t_2$.
For $k=1$ and $\bla=\bzero$ this is the elliptic analogue of the recursion
at the bottom of page~299 of~\cite{Warnaar09}.
Iterating the recursion yields
\begin{align*}
S^{\overbrace{\scriptstyle{k,\dots,k}}^{m\text{ times}}
\!\!,k_{m+1},\dots,k_n}_ {\bla,\bmu}(t_1,\dots,t_{2n+4})&=
\prod_{i=1}^k\bigg(\frac{\Gamma(t^{i-1}c^{2-2n}t_1t_2)}
{\Gamma(t^{i-1}c^{2m-2n}t_1t_2)} 
\prod_{r=1}^2 \prod_{s=3}^{2m} \Gamma(t^it_r/t_s)\bigg)\\
& \quad \times
\prod_{r=3}^{2m}\Delta^0_{\bla}(t^{k-1}t_1/t_2\vert t^k t_1/t_r) \\
& \quad \times
S^{k,k_{m+1},\dots,k_n}_{\bla,\bmu}(t_1,t_2,t_{2m+1},\dots,t_{2n+4};t;p,q),
\end{align*}
where $1\leq m\leq n$ and $t_{2m-1}t_{2m}=\cdots=t_3t_4=t^kt_1t_2$.
In particular, for $m=n$,
\begin{align*}
S^{k,\dots,k}_{\bla,\bmu}(t_1,\dots,t_{2n+4})
&=\prod_{i=1}^k \bigg(\frac{\Gamma(t^{i-1}c^{2-2n}t_1t_2)}{\Gamma(t^{i-1}t_1t_2)} 
\prod_{r=1}^2 \prod_{s=3}^{2n} \Gamma(t^it_r/t_s)\bigg)\\
& \quad \times
\prod_{r=3}^{2n}\Delta^0_{\bla}(t^{k-1}t_1/t_2\vert t^k t_1/t_r) \\
& \quad \times
S^k_{\bla,\bmu}(t_1,t_2,t_{2n+1},\dots,t_{2n+4};t;p,q)
\end{align*}
This final integral is the elliptic AFLT integral of \cite[Theorem 1.4]{ARW21},
evaluated in \cite{ARW21} without the use of the interpolation kernel.
Hence
\begin{align*}
S^{k,\dots,k}_{\bla,\bmu}(t_1,\dots,t_{2n+4})
&=\prod_{i=1}^k
\bigg( \Gamma(t^i,t^{i-1}c^{2-2n}t_1t_2)
\prod_{r=1}^2 \prod_{s=3}^{2n} \Gamma(t^it_r/t_s) \\
&\qquad\quad\times\prod_{r=1}^2 \prod_{s=2n+1}^{2n+4} \Gamma(t^{i-1}t_rt_s)
\prod_{2n+1\leq r<s\leq 2n+4} \Gamma(t^{i-1}t_rt_s)\bigg) \\
& \quad \times
\prod_{r=3}^{2n}\Delta^0_{\bla}(t^{k-1}t_1/t_2\vert t^k t_1/t_r) 
\prod_{r=2n+1}^{2n+4} \Delta^0_{\bla}(t^{k-1}t_1/t_2\vert t^{k-1}t_1t_r) \\[1mm]
&\quad\times
\prod_{r=2n+2}^{2n+3}
\Delta^0_{\bmu}(t^{k_n}\tau_n/t_{2n+4}\vert t^{k-1}t_{2n+1}t_r) \\
&\quad\times
\frac{\Delta^0_{\bmu}(t^k\tau_n/t_{2n+4}\vert 
t^kt_1\tau_n\spec{\bla}_k)} 
{\Delta^0_{\bmu}(t^k\tau_n/t_{2n+4}\vert t^{k+1}t_1\tau_n\spec{\bla}_{k;t})},
\end{align*}
where
\begin{align*}
t^{2k-2}t_1t_2t_{2n+1}t_{2n+2}t_{2n+3}t_{2n+4}
&=t^{k-2}t_3t_4t_{2n+1}t_{2n+2}t_{2n+3}t_{2n+4} \\
&=\cdots=t^{k-2}t_{2n-1}t_{2n}t_{2n+1}t_{2n+2}t_{2n+3}t_{2n+4}=pq.
\end{align*}

\end{document}